\newcommand{\cb}{\pmb{c}}
\newcommand{\sinc}{{\rm sinc}}
\newcommand{\diag}{{\rm diag}}
\newcommand{\0}{\mathbf{0}}
\newcommand{\x}{\mathbf{x}}
\newcommand{\y}{\mathbf{y}}
\newcommand{\ie}{{\em i.e.,} }
\newcommand{\eg}{{\em e.g.,} }
\newcommand{\hot}{{\rm h. o. t. }}
\newcommand{\mylvert}{\left\vert\!\left\vert}
\newcommand{\myrvert}{\right\vert\!\right\vert}
\newtheorem{thm}{Theorem}[section]
\newtheorem{cor}[thm]{Corollary}
\newtheorem{lem}[thm]{Lemma}
\theoremstyle{definition}
\newtheorem{defn}[thm]{Definition}
\newtheorem{exm}[thm]{Example}
\newtheorem{rem}[thm]{Remark}
\newtheorem{notation}[thm]{Notation}
\numberwithin{equation}{section}
\def\Blem {\begin{lem}}
\def\Elem {\end{lem}}
\def\be {\begin{equation}}
\def\ee {\end{equation}}
\def\ba {\begin{eqnarray}}
\def\ea {\end{eqnarray}}
\def\bes {\begin{equation*}}
\def\ees {\end{equation*}}
\def\bas {\begin{eqnarray*}}
\def\eas {\end{eqnarray*}}
\def\bpr {\begin{proof}}
\def\epr {\end{proof}}
\begin{document}
\baselineskip=18pt
\renewcommand {\thefootnote}{ }

\pagestyle{empty}

\begin{center}
\leftline{}
\vspace{-0.500 in}
{\Large \bf %Dynamics bifurcation analysis of harmonic music using Eulerian multiple Hopf bifurcation control
%Dynamical modelling of musical sounds using Eulerian multiple Hopf bifurcation control
%Dynamical modelling of musical tone color using multiple Eulerian Hopf bifurcation control
Musical tone coloring via bifurcation control of Eulerian \(n\)-tuple Hopf singularities
} \\ [0.3in]

{\large Majid Gazor\(^{\dag}\)\footnote{$^\dag\,$Corresponding author. Phone: (98-31) 33913634; Fax: (98-31) 33912602;
Email: mgazor@iut.ac.ir; Email: ahmad.shoghi@math.iut.ac.ir.} and Ahmad Shoghi }

\vspace{0.105in} {\small {\em Department of Mathematical Sciences, Isfahan University of Technology
\\[-0.5ex]
Isfahan 84156-83111, Iran }}

\today

\vspace{0.05in}

\noindent
\end{center}

\vspace{-0.10in}

\baselineskip=16pt

\:\:\:\:\ \ \rule{5.88in}{0.012in}

\begin{abstract}
An intrinsic essence of sounds in music is the evolution of their qualitative types while in mathematics we interpret each {\it qualitative change} by a {\it bifurcation}. {\it Hopf bifurcation} is an important venue to generate a signal with an arbitrary frequency. Hence, the investigations of musical sounds via bifurcation control theory are long-overdue and natural contributions. In this paper, we address the tone coloring of sounds by dynamical modeling of spectral and temporal envelopes. Multiple number of leading harmonic partials of a note (modulo a hearing sound velocity threshold) are attributed into an Eulerian differential system with \(n\)-tuple Hopf singularity. The qualitative evolution of the temporal envelop is then simulated over a set of consecutive time-intervals via bifurcation control of the differential system. For an instance, our proposed approach is applied on audio \(C\sharp 4\) files obtained from piano and violin. Fourier analysis is used to generate the amplitude spectral vectors. Then, we associate each amplitude spectral vector with an Eulerian flow-invariant leaf. Bifurcation control suffices to accurately construct the desired spectral and amplitude envelopes of musical notes. These correspond with a rich bifurcation scenarios involving Clifford toral manifolds for the Eulerian differential system. In order to reduce the technicalities, we employ several reduction techniques and use one bifurcation parameter. We show how different ordered sets of elementary bifurcations such as pitchfork and (double) saddle-node bifurcations are associated with the qualitative temporal envelop changes of a \(C\sharp 4\) played by either a piano or a violin. A complete hysteresis type cycle is observed within the temporal envelop bifurcations of the \(C\sharp 4\) played by violin.
\end{abstract}

%\vspace{0.10in}

\noindent {\it Keywords:} \ Hysteresis cycle and flow-invariant hypertori; Temporal envelop and bifurcation control;
Amplitude spectral vectors and invariant leaves; Harmonic partials and differential system simulation.

\vspace{0.10in} \noindent {\it 2010 Mathematics Subject Classification}:\, 34H20; 00A65; 55U10; 57N15; 34C23.

%\tableofcontents

%\noindent \rule{5.88in}{0.012in}

%\vspace{0.2in}

\section{Introduction }
Qualitative changes in sounds are the intrinsic elements of music. A qualitative change in mathematics is called a {\it bifurcation}. A bifurcation occurs when parameters of a system vary around their critical values and the qualitative behavior of the system changes. Changes in the number and stabilities of equilibria, limit cycles, (minimal) flow-invariant manifolds are each called a {\it bifurcation}. The musical instances of bifurcations occur via changes in timber (tone color), tempo, sound frequencies, dynamics and the number of notes in a chord. Hence, the dynamical modeling and simulation using bifurcation control provides a natural method for the mathematical understanding of these qualitative type patterns in musical art. In the existing literature, there are some worthwhile contributions on music and model based system theory; \eg see \cite{DynamicSpectralEnvelopeModeling,DynamicsPiano,Musicology1997,MusBoon1995,MusLeman1990,MusGeorgescu1990,MusLarge2010,MusSturman2012,KossMusicDance}. There are also an extensive recent literature on bifurcation control theory; \eg see \cite{GazorSadriBT,GazorShoghiEulNF,GazorSadri,ChenBifControl2000,ChenBifuControl,Colonius3,Colonius2,Colonius1,BifStablizationSICON99,KrenerControlBif,HamziKangCenter05,HamziCharNormalF,Respondek2006,
KangKrener,KangIEEE,Kang200,Kang98,Kang04}. However, there is no any contribution dealing with the study of musical sounds and bifurcation control. Our goal here is to present a differential system as a natural mathematical model to construct sounds of notes in musical art. We address the tone coloring of musical sounds using spectral and temporal envelopes. These are two of the most important factors in tone coloring of sounds.

There are at least two advantages for our investigation in this paper. The first is to get a mathematical perception about the {\it timbral qualitative changes} in music using bifurcation control theory. The second is to associate an audio system to the bifurcations of a family of differential systems for their better understanding by {\it ``\textbf{hearing} how bifurcations \textbf{sound} in music}''. Hence, {\it theorems on these bifurcations} can be {\it best played by musicians} and then, one can hear the {\it sound of the theorems}; also see \cite{TheoremSound}. For instance, in Section \ref{SecleafBifurcations} we show that bifurcations of asymptotically stable flow-invariant \(n\)-tori \(\mathbb{T}_n\) for large values of \(n\) naturally correspond with temporal and spectral envelops of notes in music. These are invariant geometrical objects that can be heard when the differential systems are {\it simulated and sounded by a {\sc Matlab} programming on a computer}.

\pagestyle{myheadings} \markright{{\footnotesize {\it M. Gazor and A. Shoghi \hspace{2.8in} {\it Musical timber and bifurcation control}}}}

The human ear is known as a \emph{biological frequency spectrum analyzer}. Cochlea are naturally partitioned into regions specific to different frequencies. For instance, the apex of cochlea {\it processes low frequencies} while its base deals with {\it high frequencies}. This is a natural decomposition of a harmonic sound into its harmonic partials by the ear and contributes to the {\it superposition principle} of the sound. Human ear naturally combines a set of synchronised sounds with different frequencies into a complex tone \cite{TuningTimbreSpectrumScale}. %The amplitude-size management, signal synchronization and the oscillation frequency control are our means for the timbral construction of musical sounds by computers; also see \cite{GazorShoghiHarmonic}.

Harmonic partials carry the {\it partial information} from the full data attributed to a note. A musical tone (modulo {\it a hearing sound velocity threshold}) is a {\it superposition} of multiple partials with different frequency, amplitude, and phase; \eg see \cite{FundamentalsOfMusicProcessing2015}. These partials can be simply combined by sounding the partials at the same time. This is technically known as {\it additive synthesis}; see \cite{TheMathematicalFoundationsOfMusic}. Thus, a single note sounds the same as simultaneously playing its dominant partials; see \cite{GoodVibrations2009}. The {\it superposition principle} of sounds is employed in modeling a musical tone as of a solution trajectory for a differential system. Since we intend to generate partials using Hopf bifurcations, the state variables have to come in pairs (say, (\(x_i, y_i\)) for \(i\leq n\)) and the state dimension must be an even number. Each pair of the state variables generates an individual partial of the musical note. Due to the number of required partials for a precise estimation of tone color, the state dimension can be very large and the approach must be organised to deal with such differential systems with large dimensions. Hopf bifurcations with large number of multiplicities undergo a high codimension singularity and require multiple controllers to fully control their dynamics. This is not a desirable approach for our ends. Given the structural symmetry of Eulerian flows, the partials of each note are naturally well-defined and synchronized in our modeling approach. Yet, the full bifurcation analysis of Eulerian flows is involved with CW complex structures from algebraic topology and it appeals to technical tools such as normal forms, cell-decompositions and flow-invariant foliations. These are skipped here for simplicity and the interested reader are referred to \cite{GazorShoghiBifuControl}.

%We here propose the bifurcation control of Eulerian differential systems with multiple Hopf singularity as a natural tool in this direction. Yet, a comprehensive understanding on the underlying dynamics of multiple Hopf bifurcation for Eulerian systems require the analysis of toral CW complexes and appeal to several reduction techniques such as normal form, cell- and leaf-reductions. These concepts and techniques are beyond the scope of this paper and the interested reader are referred to \cite{GazorShoghiBifuControl} for details.

To simplify the exposition, we here apply our proposed approach on a particular Eulerian system with one bifurcation parameter and illustrate its dynamics in simple terms by appealing to minimal technical concepts. This particular Eulerian system suffices to fully address the temporal amplitude control bifurcations and the spectral envelop simulation of musical notes in this paper. The bifurcations are presented for the flows restricted on a flow-invariant manifold (called by leaf-manifold). We comprehensively address the magnitude spectrum tuning and associate the invariant leaf-manifolds with the magnitude spectra of musical sounds. Next, we consider the first pair of the state variables corresponding with the leading harmonic partial on the leaf manifold. Then, we decouple it from the rest of state variables and ignore its angular component to obtain a scalar differential equation. There is a one-to-one correspondence between nonnegative steady-state bifurcations from the scalar differential equation with the ongoing flow-invariant hypertorus bifurcations on the leaf-manifold. This simply correlates the temporal envelop bifurcations with elementary bifurcations from one-dimensional state-space differential equations. We show how an ordered set of bifurcations in one-dimensional state space are associated with the qualitative type changes in the temporal envelops of notes. For an instance, a consecutive round of bifurcations consisting pitchforks and double saddle-nodes contribute into a hysteresis type phenomena. These steady-state bifurcations correspond with minimal invariant hypertori for the Eulerian flows. This is how they describe {\it temporal envelop bifurcations} in timbral changes of a \(C\sharp 4\) pitch played by piano. The discussion on the spectral bifurcations caused by qualitative changes in magnitude spectrum is an in-progress project (\eg see \cite{GazorShoghiHarmonic}) and thus, they are skipped in this paper. The bifurcations corresponding with the magnitude spectrum are generally far more complex than the temporal envelop bifurcations and are involved with bifurcations of flow-invariant toral manifolds (called by toral CW complexes); see \cite{GazorShoghiBifuControl}. This paper is the third part of our project on the bifurcation control of Eulerian flows with multiple Hopf singularities and applications in robotic team control, tone coloring of musical notes and harmonic music design; see \cite{GazorShoghiRobotic,GazorShoghiHarmonic,GazorShoghiEulNF,GazorShoghiBifuControl}.

%We provide {\sc Matlab} simulations so that the bifurcations occurring in a music can be heard using a computer while they are generated as {\it dynamics bifurcation} associated with their mathematical model. Therefore, we are able to hear and feel the effects of different types of bifurcations such as a leaf-bifurcation, a toral CW complex in a cell bifurcation and their possible subcritical or supercritical type. We show how sets of chords in a harmonic interval is naturally modeled using invariant cells and invariant leaves associated with an Eulerian type differential system with multiple Hopf singularity. Therefore, toral CW complex bifurcations provide the actual means to generate the musical notes involving different chords in a sheet music.
%Musical sounds are associated with oscillating signals and their tone coloring can be simulated using computer programming via spectral and temporal envelop bifurcation controller parameters. The spectral feedback controllers are designed using Fourier analysis while they are implemented into differential systems with multiple Eulerian Hopf type singularities.
%,Langford2HopdSIADS,MacKay,MacKayPhysicaD,BookSinusoidalOscillators}.
%Given the structural symmetries and the synchronised harmonic partials of musical sounds, Eulerian family of differential systems provides a natural differential system modeling approach for the tone color simulation of musical sounds using computer-generated signals.

This paper is organized as follows. Section \ref{Sec2} discusses temporal and spectral envelops. A one-parametric polynomial family of Eulerian differential systems are considered in Section \ref{Sec3}. We apply several reduction techniques and make a comprehensive bifurcation analysis. In Section \ref{Sec4}, the criteria for accurate peak estimations of the amplitude spectrum are presented. An Eulerian bifurcation control of musical timber is introduced in Section \ref{SecModel}. Section \ref{Algorthm} provides an algorithm on how to simulate the temporal envelop bifurcations. The algorithm is illustrated in Section \ref{SecPianoCons} using audio files of \(C\sharp4\) played by piano and violin. Finally, Section \ref{SecleafBifurcations} summarizes how different bifurcations of flow-invariant hypertori occurs when \(C\sharp 4\) is played by either piano or violin.

\section{Temporal envelop bifurcation and spectral envelop simulation }\label{Sec2}

There are several tone color modeling approaches such as {\it multidimensional scaling}, {\it analogies with vowels}, a {\it pragmatic synthesis
approach}, and {\it sound envelops}, etc; \eg see \cite{TuningTimbreSpectrumScale}. The \emph{temporal envelop bifurcation} and \emph{spectral envelope} simulations contribute into tone coloring (timber) of a musical sound; \eg see \cite{DynamicSpectralEnvelopeModeling}. This paper is devoted to address these two key timbral aspects for computer generated musical sounds. These are simulated by Eulerian bifurcations with multiple Hopf singularities using computer programming in {\sc Matlab}.

\begin{defn}[Temporal envelop bifurcations]
A temporal (amplitude) envelope of a note consists of dynamics changes of the sound over time and may be partitioned by four time segments: \emph{attack}, \emph{decay}, \emph{sustain} and \emph{release}; \eg see Figure \ref{CsharpP_Sim} and \ref{CsharpV_Sim}.
%The attack, decay, sustain, and release refer to an ADSR-temporal envelop modeling.
The attack time is generally activated by pressing the musical instruments until (before) the amplitude starts to decrease. The decay time finishes when the dynamics retains a sustainable level. The sustain amplitude level usually is viable for a longer period of time than the attack and decay times. The sustain time is terminated once the musical key is released; \eg see \cite{TheMathematicalFoundationsOfMusic,TuningTimbreSpectrumScale}. These gives rise to {\it qualitative changes} of sound and thus, they are called by {\it temporal envelop bifurcations}. These temporal envelop bifurcations play a central role in {\it tone coloring} and {\it spatial hearing}. Temporal amplitude bifurcations are associated with various type of minimal flow-invariant hypertorus bifurcations of the corresponding differential system.
\end{defn}

\begin{rem}\label{Rem4.12}(Hearing threshold)
Human audible frequencies are from \(20\)Hz to \(20k\)Hz. The upper frequency threshold decreases when people get older. {\it Threshold of hearing} (\textrm{dB}) usually refers to the lowest audible intensity corresponding with pure tones in a noiseless environment; see \cite{DigitalSignalProcessing,ThePhysicsOfMusicAndColor}.
Audio files in {\sc Matlab} are read by the command \texttt{[y, Fs] = {\tt audioread(filename, dataType)}}. When data type of an audio file is taken as `double',
the elements of the matrix \texttt{y} are normalized between $-1.0$ and $1.0.$ The `native' data type refers to different varieties of scaling data in {\sc Matlab}. In either cases, the amplitude of recorded sound wave does not provide any information about sound pressure level of the audio signal. Thereby, we appeal to the {\it (MIDI) velocity values} with an integer range of data between 0 and 127. Here, the zero velocity is equivalent to a note-off. We assume that the dynamic {\it\textbf{ppp}} (pianississimo) is the softest dynamic in our tone modeling with an approximate velocity of 16. Therefore, we propose the {\it threshold of hearing} as less than half of this. Hence, the normalized velocity value of \(\frac{7}{127}\thickapprox 0.0551\) is taken as the {\it threshold of hearing} throughout this paper. Thus, estimations with errors of less than this are considered as accurate.
\end{rem} Every note on a stave is a musical sound specified by four characteristic: {\it pitch, duration, dynamics} and {\it tone color (timber)}. The sinusoids in the Fourier series analysis of a sound is called by a {\it harmonic partial} or simply a {\it partial}. The leading partial {\it frequency of a note} corresponds with the {\it pitch} of the note. This is the lowest partial frequency in a tone and is called by {\it fundamental frequency}. The frequency of the remaining partials are integer multiples of the fundamental frequency and they are called by {\it overtones} with frequency \(\omega_k\), where \(\omega_k=k\omega_1\) for \(2\leq k\leq n.\) The {\it duration} of a note refers to the time interval that the note is played in a sheet music. {\it Dynamics} refers to the loudness of the note. The {\it sound quality} is called by {\it timber} or {\it color of a sound}. Timber depends on the method, materials and shape of the vibrating instrument; see \cite{TheoryPractice,AcousticsAndPsychoacoustics}. The {\it harmonic (partials) series} of a sound determine the sound and the spectral envelop of its timber; see Figure \ref{D4-Csharp4Spectrum}. Remark that bells and tympani have inharmonic partials. However, inharmonic partials, noises and vibrations are not treated in this paper.
%The Fourier series of a musical tone with the fundamental frequency \(\frac{\omega}{2\pi}\)hz  is \begin{eqnarray}&d_0+\sum_{k=1}^{\infty}\left(a_k\sin(k\omega t)+b_k\cos(k\omega t)\right)= d_0+\sum_{k=1}^{\infty}d_k\sin(k\omega t+\phi_k)& \end{eqnarray} where \(\phi_k=\arctan \frac{b_k}{a_k}\) is $k$-th phase shift and $d_k:=\sqrt{{a_k}^2+{b_k}^2}$ is $k$-th amplitude for \(k\geq 1\). Here the $k$-th summand \(d_k\sin(k\omega t+\phi_k)\) has the same basic sound (pitch and timbre) as \(d_k\sin(k\omega t).\) This is called the \(k\)-th \emph{harmonic} or the \((k-1)\)-th \emph{overtone}.
%\coR The spectra of notes played by clarinet, piano and violin are made up with harmonic partials.

%In this paper, we deal with modeling the amplitude spectrum and temporal bifurcations occurring in musical notes. Fundamental frequency refers to the lowest frequency associated with (the leading) harmonic partials of a sound. The frequency of the leading harmonic partial (the lowest pitched partial in a tone) is called by the {\it fundamental frequency} of the sound.

\begin{defn}[Spectral envelop and the peaks of the amplitude spectrum]
Let \(\mathcal{F}(f)\) be the Fourier transform of a musical note signal \(f(t).\) Then, the amplitude spectrum (Fourier magnitude spectrum) is \(|\mathcal{F}(f)(\omega)|.\) Let \(\omega_i\) be the \(i\)-th partial frequency of the signal \(f(t).\) A spectral envelope of \(f(t)\) is a frequency domain function and is defined as a function interpolating the points
\be\label{Peaks} \left\{\big(\omega_i, |\mathcal{F}(f)(\omega_i)|\big)\, |\, \omega_i \hbox{ is a harmonic partial frequency of } f(t) \hbox{ for } 1\leq i\leq n \right\}.\ee
The points in the set \eqref{Peaks} are called by the \(n\)-{\it leading peaks} (we simply refer to them by the {\it peaks}) of the amplitude spectrum. The vector \((|\mathcal{F}(f)(\omega_i)|)^n_{i=1}\) plays the role of the amplitude spectral vector \({\pmb{c}}\) and corresponds with a flow-invariant leaf-manifold.
\end{defn} We here consider the maximum number of required partials for modeling musical tones as \(n:=6.\) This is consistent with the claim on
\cite[page 126]{PhysicsAndMusic}, where authors indicate that the first five harmonics are sufficient for modeling a complex tone with a fundamental frequency less than \(200\)Hz. The number of dominant harmonic partials decreases as the fundamental frequency increases.

\section{Eulerian differential systems: flow-invariant leaves and bifurcations}\label{Sec3}
\begin{notation}
Let \(\mathbf{a}:=(a_i)^n_{i=0}\) denote the \(n+1\)-dimension real vector \((a_0, a_1, \ldots, a_n)\in \mathbb{R}^{n+1}.\) Further, write \(\cos \mathbf{a}\) instead of the real vector \((\cos a_0, \ldots, \cos a_{n})\) and denote \(\sin\mathbf{a}\) for \((\sin a_i)^n_{i=0}.\) Denote \(\mathbb{R}[
\mu, \x, \y]\) for the set (ring) of all polynomials in terms of \(\x, \y\in \mathbb{R}^{n+1}\) and \(\mu\in \mathbb{R}.\) A hypertorus refers to a manifold homeomorphic to the standard \(n\)-torus and is denoted by \(\mathbb{T}_{n}\).
\end{notation}
Every polynomial Eulerian vector field is denoted by
\bes
E_f:=f(\mu, \x, \y)E_{\0}\quad \hbox{ for some } \; f\in \mathbb{R}[\mu, \x, \y], \; \hbox{ where } \; E_{\0}:=((x_i)^n_{i=0}, (y_i)^n_{i=0}).
\ees %\(\sum_{i=0}^{n}\left(\frac{x_i\partial }{\partial x_i}+\frac{y_i \partial}{\partial y_i}\right)\) %\(E_{\0}:=\sum_{i=0}^{n}\left(\frac{x_i\partial }{\partial x_i}+\frac{y_i \partial}{\partial y_i}\right)\)
Define the linear \emph{rotating vector field} \(\Theta^{\pmb{\omega}}_{\0}\) with angular frequency vector \(\pmb{\omega}\) by
\bes \Theta^{\pmb{\omega}}_{\0}:=\left((-\omega_i y_i)^n_{i=0}, (\omega_i x_i)^n_{i=0}\right),\quad \hbox{ where }\; \pmb{\omega}:= (\omega_0, \omega_1, \ldots, \omega_n)\in \mathbb{R}^{n+1}.\ees
An initial value problem associated with a polynomial Eulerian differential system and a $n$-tuple Hopf singularity refers to
%a differential system given by \begin{equation}\label{EulerianVF}v(\x, \y):=\Theta^{\pmb{\omega}}_{\0}+E_f, \hbox{ for a polynomial } f\in \mathbb{R}[\mu, \x, \y].
%\end{equation}  The vector field type \eqref{EulerianVF} corresponds with a differential system expressed by
\begin{eqnarray}\label{EulerianDiff}
&\frac{\textrm{d}}{\textrm{d}t}(\x, \y):= \Theta^{\pmb{\omega}}_{\0}+E_f= \left((-\omega_i y_i)^n_{i=0},(\omega_i x_i)^n_{i=0}\right)+f(\mu, \x, \y) E_{\0}, \; \x(0, \mu)=\x^{\circ}, \y(0, \mu)=\y^{\circ}, &
\end{eqnarray} for \(f\in \mathbb{R}[\mu, \x, \y].\) The pairs \((\omega_i, |(\x^\circ_i, \y^\circ_i)|_2)\) for \(1\leq i\leq n\) correspond with the peaks of the spectral envelop of a musical sound while the function \(f\) is modeled based on its temporal envelop. The Eulerian vector field \(E_0\) generates synchronised harmonic partials. Thus, the individual state variables naturally can simulate the harmonic partials of a given musical sound. This usage is also enabled by the superposition principle property of sounds. The oscillation of partials are generated via \(\Theta^{\pmb{\omega}}_{\0}.\) The initial value problem \eqref{EulerianDiff} in polar coordinates is read by
\begin{eqnarray}\label{PolarEq1}
&\frac{\mathrm{d} \mathbf{r}}{\mathrm{d}t} =\mathbf{r} f\left(\mu, r_0\cos\theta_0, \ldots, r_n\cos\theta_n, r_0\sin\theta_0, \ldots, r_n\sin\theta_n \right),  \quad \dot{\theta}_i=\omega_i, &
\end{eqnarray} where \(r_i(0, \mu)=({x_i^{\circ}}^2+{y_i^{\circ}}^2)^{\frac{1}{2}}\) and \(\theta_i(0)= \tan^{-1}\frac{y_i^\circ}{x_i^\circ}\) for \(0\leq i\leq n.\) This is deduced from \(r_i\dot{r}_i=x_i\dot{x}_i+y_i\dot{y}_i,\) \({r_i}^2\dot{\theta}_i=x_i\dot{y}_i-y_i\dot{x}_i\) and
\begin{eqnarray}\label{x_i, y_i} &x_i(t, \mu)=r_i(t, \mu)\cos\left(\omega_it+\arctan\frac{y_i^{\circ}}{x_i^{\circ}}\right), \quad
  y_i(t, \mu)=r_i(t, \mu)\sin\left(\omega_it+\arctan\frac{y_i^{\circ}}{x_i^{\circ}}\right).&
\end{eqnarray}

Denote \((\mathbf{r}(t, \mu), \theta(t))\) for a solution trajectory of the differential system \eqref{PolarEq1}. For every \({\pmb{c}}\in \mathbb{R}^{n+1}_{\geq0},\)  there exists a maximal half-open interval \(E^{loc}_{\mu,{\pmb{c}}}=[0, e_{\mu, \cb})\) for \(e_{\mu, \cb}\in \mathbb{R}\cup\{\infty\}\) such that \(\mathbf{r}(0, \mu)=\cb,\) and \(\mathbf{r}(t, \mu)\) exists for all \(t\in E^{loc}_{\mu,{\pmb{c}}}\). We call \(E^{loc}_{\mu,{\pmb{c}}}\) by the maximal forward-time interval of the existence for the trajectory \((\mathbf{r}(t, \mu), \theta(t))\). Now define the local manifold \(\mathcal{M}^{loc}_{\mu, {\pmb{c}}}\)  by
\begin{eqnarray}\label{ManifLocal}
&\mathcal{M}^{loc}_{\mu,{\pmb{c}}}:= \{(\mathbf{u}, \mathbf{v})\in \mathbb{R}^{2n+2}|\,  \|(u_i, v_i)\|_2=c_i\xi(\mu,t)\, \hbox{ for }\, 0\leq i\leq n \; \hbox{ and }\, t\in E^{loc}_{\mu,{\pmb{c}}}\}, &
\end{eqnarray} where
\be\label{xi}\xi(\mu, t):=\exp\left(\int_{0}^{t}f\left(\mu, \x\big(\mathbf{r}(\tau, \mu), \theta(\tau)\big), \y\big(\mathbf{r}(\tau, \mu), \theta(\tau)\big)\right)d \tau\right).
\ee In section \ref{SecModel}, the vector \({\pmb{c}}\) is associated with the amplitude spectrum evaluated at partial frequencies of a note. Thus, we call \(\cb\) by the amplitude spectral vector. Further, the individual segments of the timbral dynamics of a note lies on the space \(\mathcal{M}^{loc}_{\mu,{\pmb{c}}}\) while the whole timbral dynamics of a note lives on the leaf manifold
\be\label{Mc}\mathcal{M}_{{\pmb{c}}}:= \cup_{\alpha \in \mathbb{R}_{>0}}\mathcal{M}^{loc}_{\mu, \mathbf{\alpha c}}.\ee The leaf manifold \(\mathcal{M}_{{\pmb{c}}}\) is independent of the parameter \(\mu\) (see claim \ref{welldef} in Lemma \ref{lem1}) and is homeomorphic to \(\mathbb{R}\times \mathbb{T}_{n+1}\); see \cite{GazorShoghiBifuControl}.
% and refer \(\omega:=(\omega_1, \ldots, \omega_n)\) to the partial frequency vector.

\begin{lem}\label{lem1} Let \((\x(t, \mu), \y(t, \mu))\) be a solution trajectory for the initial value problem \eqref{EulerianDiff}. Then,

\begin{enumerate}
\item \label{Item1-1}There exists \({\pmb{c}}\in \mathbb{R}^{n+1}_{\geq0}\) such that the manifold \(\mathcal{M}^{loc}_{\mu, {\pmb{c}}}\) given by \eqref{ManifLocal} is flow-invariant under the trajectories of the differential system \eqref{EulerianDiff}.
\item\label{welldef} The set \(\cup_{\alpha \in \mathbb{R}_{>0}}\mathcal{M}^{loc}_{\mu, \mathbf{\alpha c}}\) is independent of the parameter \(\mu\) and the function \(f\). In particular, \(\cup_{\alpha \in \mathbb{R}_{>0}}\mathcal{M}^{loc}_{\mu_1, \mathbf{\alpha c}}= \cup_{\alpha \in \mathbb{R}_{>0}}\mathcal{M}^{loc}_{\mu_1, \mathbf{\alpha c}}\) for all \(\mu_1, \mu_2 \in \mathbb{R}.\) Furthermore, \(\mathcal{M}_{{\pmb{c}}}= \mathcal{M}_{\alpha{\pmb{c}}}\) for all \(\alpha\in \mathbb{R}_{>0}.\)
\item\label{Item3} The leaf manifold \(\mathcal{M}_{{\pmb{c}}}\) is invariant under \((\x(t, \mu), \y(t, \mu))\) if and only if \(\mathbf{r}(t, \mu)=\frac{{r}_{1}(t, \mu)}{c_1}{\pmb{c}}.\)
%\item The leaf-manifold \(\mathcal{M}_{{\pmb{c}}}\) for \({\pmb{c}}\in \mathbb{R}^{n}_{>0}\) is a flow-invariant manifold homeomorphic to \(\mathbb{R}\times \mathbb{T}_n.\)
\end{enumerate}
\end{lem}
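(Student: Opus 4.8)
The plan is to extract one structural identity from the Eulerian form and let all three claims fall out of it. Along any solution trajectory, set \(g(t) := f(\mu, \mathbf{x}(t,\mu), \mathbf{y}(t,\mu))\); then \eqref{PolarEq1} reads \(\dot r_i = r_i\, g(t)\) simultaneously for every \(0 \le i \le n\), the scalar \(g\) being common to all components because the radial field is the single Euler field \(E_{\0}\) scaled by \(f\). Integrating gives the \emph{master identity} \(\mathbf{r}(t,\mu) = \Xi(t)\,\mathbf{r}(0,\mu)\) with \(\Xi(t) := \exp\!\big(\int_0^t g(\tau)\,d\tau\big) > 0\); equivalently \(\tfrac{d}{dt}(r_i/r_j) = 0\), so the radial vector never leaves the ray \(\mathbb{R}_{>0}\,\mathbf{r}(0,\mu)\). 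Components with \(r_i(0) = 0\) stay identically zero by uniqueness for \(\dot r_i = r_i g\), so the vanishing pattern is preserved and does not obstruct the ratio argument.

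For claim \ref{Item1-1} I would take \(\mathbf{c} := \mathbf{r}(0,\mu)\). By construction \(\Xi = \xi(\mu,\cdot)\) of \eqref{xi}, so the master identity becomes \(r_i(t,\mu) = c_i\,\xi(\mu,t)\), which is exactly the defining condition \eqref{ManifLocal} of \(\mathcal{M}^{loc}_{\mu,\mathbf{c}}\); hence the generating trajectory lies on the manifold for every \(t \in E^{loc}_{\mu,\mathbf{c}}\). To upgrade this to genuine set flow-invariance I would start from an arbitrary point of \(\mathcal{M}^{loc}_{\mu,\mathbf{c}}\), whose radii are \(c_i\xi(\mu,t_0)\); ratio preservation keeps its radial vector on \(\mathbb{R}_{>0}\mathbf{c}\), so it suffices to track the single scalar factor \(\lambda(t)\) with \(\mathbf{r} = \lambda\mathbf{c}\). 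The delicate point — and the step I expect to be the main obstacle — is that \(\lambda(t)\) must remain inside the range \(\{\xi(\mu,s) : s \in E^{loc}_{\mu,\mathbf{c}}\}\). This is where the torus symmetry of the Eulerian normal form enters: because \(f\) is invariant under the rotation flow of \(\Theta^{\pmb\omega}_{\0}\), the restricted radial equation \(\dot\lambda = \lambda f(\mu,\lambda\mathbf{c})\) is autonomous, so its solution through \(\lambda(0) = \xi(\mu,t_0)\) is the time-shift \(\xi(\mu, t+t_0)\) by uniqueness and therefore stays in range. I would make this reduction explicit and note that without the angular invariance only the weaker trajectory-wise invariance of claim \ref{Item3} survives.

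For claim \ref{welldef} I would compute the union directly. Evaluating \eqref{ManifLocal} at \(t = 0\) gives \(\xi(\mu,0) = 1\), so the \(t=0\) slice of \(\mathcal{M}^{loc}_{\mu,\alpha\mathbf{c}}\) is the full product torus with radii \(\alpha\mathbf{c}\); letting \(\alpha\) run over \(\mathbb{R}_{>0}\) already produces every positive multiple of \(\mathbf{c}\) as a radial vector. Every other slice (\(t \neq 0\)) has radii \(\alpha\xi(\mu,t)\mathbf{c}\), again a positive multiple of \(\mathbf{c}\), hence is contained in what the \(t=0\) slices already supply. Therefore \(\cup_{\alpha>0}\mathcal{M}^{loc}_{\mu,\alpha\mathbf{c}} = \{(\mathbf{u},\mathbf{v}) \in \mathbb{R}^{2n+2} : (\|(u_i,v_i)\|_2)_{i=0}^{n} \in \mathbb{R}_{>0}\mathbf{c}\}\), a description in which neither \(\mu\) nor \(f\) appears, which is the asserted \(\mu\)- and \(f\)-independence (the displayed identity across the parameters \(\mu_1,\mu_2\) being the special case of this). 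Since the ray \(\mathbb{R}_{>0}\mathbf{c}\) equals \(\mathbb{R}_{>0}(\alpha\mathbf{c})\) for every \(\alpha>0\), the same set arises from \(\alpha\mathbf{c}\), so \(\mathcal{M}_{\mathbf{c}} = \mathcal{M}_{\alpha\mathbf{c}}\).

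Finally, claim \ref{Item3} is immediate from the ray description of \(\mathcal{M}_{\mathbf{c}}\) in \eqref{Mc}. If the trajectory stays on \(\mathcal{M}_{\mathbf{c}}\), then at each \(t\) its radial vector lies on \(\mathbb{R}_{>0}\mathbf{c}\), so \(\mathbf{r}(t,\mu) = \lambda(t)\mathbf{c}\); reading off the first component forces \(\lambda(t) = r_1(t,\mu)/c_1\), which is the asserted formula \(\mathbf{r}(t,\mu) = \frac{r_1(t,\mu)}{c_1}\mathbf{c}\). Conversely, that formula says the radial vector is the positive multiple \(\frac{r_1(t,\mu)}{c_1}\) of \(\mathbf{c}\) at every \(t\), i.e. it lies on the ray for all time, so the whole trajectory sits inside \(\mathcal{M}_{\mathbf{c}}\). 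The only point to keep honest here is that \(c_1 > 0\), so the normalization is legitimate; one arranges this by indexing the partials so that the chosen component does not vanish, the general case using any nonzero component in place of the first.
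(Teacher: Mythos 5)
Your proof is correct and follows essentially the same route as the paper's: the integrated radial equation \(\mathbf{r}(t,\mu)=\xi(\mu,t)\,\mathbf{r}(0,\mu)\) (the paper's equation \eqref{Eq2}) is your master identity, the choice \({\pmb{c}}:=\mathbf{r}(0,\mu)\) gives claim \ref{Item1-1}, the ray (equivalently cross-ratio) description of \(\cup_{\alpha>0}\mathcal{M}^{loc}_{\mu,\alpha{\pmb{c}}}\) gives claim \ref{welldef}, and proportionality of the radial vector to \({\pmb{c}}\) gives claim \ref{Item3}. Your extra care in separating trajectory-wise invariance from full set invariance of \(\mathcal{M}^{loc}_{\mu,{\pmb{c}}}\) --- and the observation that the latter would require the \(\mathbb{T}_{n+1}\)-invariance of \(f\), which is not assumed in this lemma --- is a legitimate refinement the paper does not make; its proof, like your first paragraph, only establishes that the generating trajectory lies on the manifold.
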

\begin{proof} Let \(\cb\neq 0\). Claim \eqref{Item1-1}: From equation \eqref{PolarEq1}, for every \(0\leq i\leq n\) we have
\begin{eqnarray*}
\ln(|r_i(t, \mu)|)-\ln(|r_i(0, \mu)|)=\int_{0}^{t}f(\mu, \x(\mathbf{r}(\tau, \mu), \theta(\tau)), \y(\mathbf{r}(\tau, \mu), \theta(\tau)))\mathrm{d}\tau.
\end{eqnarray*} Substituting \(|r_i(t, \mu)|=\|(x_i(t, \mu), y_i(t, \mu))\|_2\) into the above relation gives rise to
\begin{eqnarray}\label{Eq2}
\|(x_i(t, \mu), y_i(t, \mu))\|_2=\|(x_i(0, \mu), y_i(0, \mu))\|_2\exp\left(\int_{0}^{t}f(\mu, \x(\mathbf{r}, \theta), \y(\mathbf{r}, \theta))\mathrm{d}\tau\right).
\end{eqnarray} Now it suffices to take \(c_i:=\|(x_i^{\circ}, y_i^{\circ})\|_2\) and \(\xi(t, \mu)\) as given by equation \eqref{xi}.

The claim \ref{welldef} directly follows from
\begin{eqnarray*}
\mathcal{M}_{{\pmb{c}}}=\{(\mathbf{u}, \mathbf{v})\in \mathbb{R}^{2n+2}\, |\, c_j\|(u_i, v_i)\|_2=c_i\|(u_j, v_j)\|_2 \text{\quad for\quad} 0\leq i\leq j\leq n\}.
\end{eqnarray*}

Claim \ref{Item3}: Let \(\mathcal{M}_{{\pmb{c}}}\) be invariant under the trajectory \((\x(t, \mu), \y(t, \mu))\). By equation \eqref{Eq2},
\begin{eqnarray*}
&\mathbf{r}=(r_0, r_1, \ldots, r_n)= \left(\frac{r_0(t, \mu)}{c_1}c_0, {r_1(t, \mu)}, \ldots, \frac{ r_1(t, \mu)}{c_1}c_n\right)=\frac{r_1(t, \mu)}{c_1}(c_0,c_1, \ldots, c_n)=\frac{r_1(t, \mu)}{c_1}{\pmb{c}}.&
\end{eqnarray*}
Now assume that \(\mathbf{r}(t, \mu)=\frac{{r}_{1}}{c_1}{\pmb{c}}.\) By claim \ref{Item1-1}, there exists \({\pmb{c}}^{\prime}\) so that \(\mathcal{M}_{{\pmb{c}}^{\prime}}\) is invariant under the flow \((\x, \y).\) This enforces the equalities
 \begin{eqnarray*}
 &\frac{c_i^{\prime}}{c_1^{\prime}}=\frac{c_i}{c_1}=\frac{r_i(t, \mu)}{r_1(t, \mu)} \; \hbox{ and }\; {\pmb{c}}^{\prime}=(c_0^{\prime}, c_1^{\prime}, \ldots, c_n^{\prime})=(\frac{c_1^{\prime}}{c_1}c_0, \ldots, \frac{c_1^{\prime}}{c_1}c_n)=\frac{c_1^{\prime}}{c_1}{\pmb{c}}.&
 \end{eqnarray*}
%This implies \(\frac{r_i(t)}{r_1(t)}=\frac{c_i}{c_1}\) which by similar reasoning in item 1 leads to \({\pmb{c}}^{\prime}=\frac{c_1^{\prime}}{c_1}{\pmb{c}}.\)
Since \(\mathcal{M}_{c_1^{\prime}{\pmb{c}}/c_1}^{loc}\subset\mathcal{M}_{{\pmb{c}}}\), the manifold  \(\mathcal{M}_{{\pmb{c}}}\) is invariant under the trajectory \((\x, \y).\)
\end{proof}

By Lemma \ref{lem1}, the reduced system \eqref{EulerianDiff} on the invariant manifold \(\mathcal{M}_{{\pmb{c}}}\) follows
\ba\label{ReducedEqu1}
&\textstyle \frac{d\mathbf{r}}{dt}= \frac{r_1}{c_1}\cb f\left(\mu, \frac{c_0}{c_1}r_1\cos\theta_0(t), \ldots,\frac{c_n}{c_1}r_1\cos\theta_n(t), \frac{c_1}{c_1}r_1\sin\theta_0(t), \ldots, \frac{c_n}{c_1}r_1\sin\theta_n(t) \right),&\\\label{ReducTheta}
&\theta_i(t)=\omega_it+\arctan\frac{y_i^{\circ}}{x_i^{\circ}}\quad \hbox{ for }\; t\in E^{loc}_{\mu, \cb}= [0, e_{\mu, \cb}), \cb\neq 0, r_i(0, \mu)=({x_i^{\circ}}^2+{y_i^{\circ}}^2)^{\frac{1}{2}}.&
\ea Let \(f\) be a \(\mathbb{T}_{n+1}\)-invariant polynomial. Then,
\(f\in \mathbb{R}[\mu, {x_0}^2+{y_0}^2, \ldots, {x_n}^2+{y_n}^2]\) and there exists a function
\bes
\widehat{f}:\mathbb{R}\times\mathbb{R}^{n+1}\rightarrow \mathbb{R}\; \quad \hbox{ such that }\; \widehat{f}(\mu, {x_0}^2+{y_0}^2, \ldots, {x_n}^2+{y_n}^2)= f(\mu, \x, \y).
\ees In this case, the coupled reduced system \eqref{ReducedEqu1}-\eqref{ReducTheta} on \(\mathcal{M}_{\pmb{c}}\) follows equations \eqref{ReducTheta} and
\ba \label{ReducedEqu2}
& \frac{d\mathbf{r}}{dt}= \frac{r_1}{c_1}\cb \tilde{f}(\mu,r_1), \quad\hbox{ where } \quad \tilde{f}(\mu,r_1):=\widehat{f}\left(\mu, \frac{{c_0}^2}{{c_1}^2}{r_1}^2, {r_1}^2, \ldots,  \frac{{c_n}^2}{{c_1}^2}{r_1}^2 \right).  &
\ea Thus, the angular components are decoupled from the amplitude equations. This gives rise to an effective reduction technique by ignoring the angular components. This is employed in the next lemma. Nonlinear transformations can transform all systems of type \eqref{ReducedEqu1} into a system of the form \eqref{ReducTheta}-\eqref{ReducedEqu2}. The differential system \eqref{ReducTheta}-\eqref{ReducedEqu2} is called a normal form system; \eg see \cite{GazorShoghiEulNF}.

\begin{lem}\label{lem2} Let \(\0\neq {\pmb{c}}\in \mathbb{R}^{n+1}_{\geq0}\). Let the hypotheses of Lemma \ref{lem1} hold, \(f\in \mathbb{R}[\mu,{x_0}^2+{y_0}^2, \ldots, {x_n}^2+{y_n}^2],\) and consider the differential equation
\begin{eqnarray}\label{k-thEqPolarReduced}
& \frac{\mathrm{d} r_1}{\mathrm{d}t}=r_1 \tilde{f}(\mu, r_1).&
\end{eqnarray}
\begin{enumerate}
\item\label{monotonicity} Every solution \(r_i(t, \mu)\) for equation \eqref{ReducedEqu2}, \(0\leq i\leq n,\) is a \(\mu\)-parametric family of monotonic
functions. Furthermore, \(r_i(t, \mu)\) is positive (negative) for all \(t\in E^{loc}_{\mu,{\pmb{c}}}\) if and only if \(r_1(0, \mu):=r_1^{\circ}>0\) \((r_1^{\circ}<0)\).
\item\label{claim1} When the solution \(r_1(t, \mu)\) of \eqref{k-thEqPolarReduced} approaches to an equilibrium in forward time, the maximal forward-time
interval of the existence for \(r_1\) is the positive real line.
\item\label{claim3} An equilibrium of the differential system \eqref{ReducedEqu2} on the manifold \(\mathcal{M}_{\pmb{c}}\) is given by \(\frac{r^*_{1}}{c_1}{\pmb{c}}\)
when it exists. Further, \(\frac{r^*_{1}}{c_1}{\pmb{c}}\) is an equilibrium for \eqref{ReducedEqu2} if and only if \(r^*_{1}\) is an equilibrium for the scalar equation \eqref{k-thEqPolarReduced}. The point \(\frac{r^*_{1}}{c_1}{\pmb{c}}\) is an asymptotically stable (unstable) equilibrium for  \eqref{ReducedEqu2} if and only if \(r^*_{1}\) is an asymptotically stable (unstable) equilibrium for  \eqref{k-thEqPolarReduced}.
\item\label{claim4} Let \(\omega_{n_j}= 0\) and \(\omega_i\neq 0\) for \(i\neq {n_j}\) and \(0\leq j< k\leq n\). Then,
the manifold
\begin{eqnarray}\label{eq4.12}
&\small\Gamma:=\left\{\frac{r_1^{\star}}{c_1}(\cos(\mathbf{a})\,\diag({\pmb{c}}), \sin(\mathbf{a})\,\diag({\pmb{c}}))\in \mathbb{R}^{2n+2}\,|\,  \mathbf{a}\in\mathbb{R}^{n+1}, a_{n_j}=\theta^\circ_{n_j}\,\hbox{ for }\, j\leq k \right\}&
\end{eqnarray} is an invariant (\(n+1-k\))-torus under the flow of the system \eqref{EulerianDiff} if and only if \(r^*_{1}\) is an equilibrium for equation \eqref{k-thEqPolarReduced}. Furthermore, \(\Gamma\) is \(\mathcal{M}_{\pmb{c}}\)-asymptotically stable (unstable) (\ie only for the flows on \(\mathcal{M}_{\pmb{c}}\)) if and only if \(r_1^{\star}\) is an asymptotically stable (unstable) equilibrium for \eqref{k-thEqPolarReduced}.
\end{enumerate}
\end{lem}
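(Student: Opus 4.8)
The plan is to reduce all four assertions to the single autonomous scalar equation \eqref{k-thEqPolarReduced}, namely $\dot r_1=r_1\tilde f(\mu,r_1)$, and to transport each conclusion back to $\mathcal M_{\cb}$ through the relations $r_i=\tfrac{c_i}{c_1}r_1$ and $\mathbf r=\tfrac{r_1}{c_1}\cb$ supplied by claim \ref{Item3} of Lemma \ref{lem1} and by \eqref{ReducedEqu2}. For each fixed $\mu$ the function $g(r_1):=r_1\tilde f(\mu,r_1)$ is a polynomial in $r_1$, hence locally Lipschitz, so the standard existence--uniqueness theorem for ordinary differential equations applies throughout and trajectories cannot cross equilibria.

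For claim \ref{monotonicity} I would first note that $g(0)=0$, so $r_1=0$ is always an equilibrium of \eqref{k-thEqPolarReduced}; by uniqueness no solution may cross it, which pins the sign of $r_1(t,\mu)$ to that of $r_1^\circ$ for every $t\in E^{loc}_{\mu,\cb}$. Monotonicity is then the elementary fact for autonomous scalar equations: $\dot r_1$ carries the sign of $g(r_1)$, and since the orbit is trapped between two consecutive zeros of $g$ (on which $g$ keeps a constant sign) it moves monotonically. Because $r_i=\tfrac{c_i}{c_1}r_1$ with $\tfrac{c_i}{c_1}\ge0$ on $\mathcal M_{\cb}$, each $r_i(\cdot,\mu)$ inherits both its sign and its monotonicity from $r_1$.

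Claim \ref{claim1} follows from the continuation (escape) theorem: if $r_1(t,\mu)$ approaches an equilibrium $r_1^*$ in forward time, it stays in a compact set, so a finite escape time is impossible and $e_{\mu,\cb}=\infty$. For claim \ref{claim3}, the right-hand side of \eqref{ReducedEqu2} is $\tfrac{r_1}{c_1}\cb\,\tilde f(\mu,r_1)$; since $\cb\neq\0$ it vanishes precisely when $r_1\tilde f(\mu,r_1)=0$, i.e. exactly at the equilibria of \eqref{k-thEqPolarReduced}, and the associated point on $\mathcal M_{\cb}$ is $\tfrac{r_1^*}{c_1}\cb$. The linear map $\Phi:r_1\mapsto \tfrac{r_1}{c_1}\cb$ is a homeomorphism onto the ray $\{s\cb: s\ge0\}$ that conjugates the scalar flow with the radial flow of \eqref{ReducedEqu2}; as asymptotic stability and instability are invariant under topological conjugacy, $r_1^*$ and $\tfrac{r_1^*}{c_1}\cb$ share the same stability type.

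The substantive assertion is claim \ref{claim4}. Parametrizing the flow on $\mathcal M_{\cb}$ by $(r_1,\theta_0,\dots,\theta_n)$ with $\dot r_1=r_1\tilde f$ and $\theta_i(t)=\omega_i t+\theta_i^\circ$ as in \eqref{ReducTheta}, I would argue that if $r_1^\star$ is an equilibrium of \eqref{k-thEqPolarReduced} then every orbit issued from $\Gamma$ keeps its radius frozen at $r_1^\star$, keeps the $k$ zero-frequency angles $\theta_{n_j}$ at $\theta_{n_j}^\circ$, and rotates the remaining $n+1-k$ angles, thereby sweeping out exactly the set \eqref{eq4.12}; hence $\Gamma$ is invariant and, being parametrized by the $n+1-k$ free angles, is an $(n+1-k)$-torus. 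Conversely, invariance of $\Gamma$ forces the radii of such an orbit to stay equal to $\tfrac{r_1^\star}{c_1}c_i$, i.e. $r_1(t)\equiv r_1^\star$, so $r_1^\star$ is an equilibrium. For the stability part I would fiber $\mathcal M_{\cb}$ over the radial coordinate by the tori $\Gamma_{r_1}$, observe that the nonzero-frequency angular directions are tangent to $\Gamma$ and hence neutral and irrelevant to transverse stability, and conclude that the transverse return dynamics reduces to the scalar equation, giving the stated equivalence. The step I expect to be the main obstacle is the correct treatment of the zero-frequency directions: these are neutral yet transverse to $\Gamma$, so the very meaning of ``$\mathcal M_{\cb}$-asymptotic stability'' --- as the parenthetical ``only for the flows on $\mathcal M_{\cb}$'' signals --- must be fixed so that the transverse return dynamics is read off the radial coordinate alone, and it is pinning this down that makes the equivalence rigorous.
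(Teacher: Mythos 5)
Your proposal is correct, and for claims \ref{monotonicity}, \ref{claim3} and \ref{claim4} it follows essentially the paper's route: the sign and monotonicity of $r_1$ are pinned by the scalar autonomous structure (the paper argues non-monotonicity by contradiction via the flow property, you argue via trapping between consecutive zeros of $r_1\tilde f$ --- equivalent); equilibria and their stability are transported along the ray $s\mapsto s\pmb{c}/c_1$ (the paper writes your conjugacy out as an explicit $\epsilon$--$\delta$ bookkeeping with the scale factor $\sum_i c_i/c_1$); and for claim \ref{claim4} the paper likewise reduces $\mathcal{M}_{\pmb{c}}$-asymptotic stability of $\Gamma$ to claim \ref{claim3} by showing $d((\x(t),\y(t)),\Gamma)=\sum_i\bigl(r_i(t)-c_ir_1^{\star}/c_1\bigr)^2$, the infimum over the free angles being attained at $a_i=\omega_i t+\theta_i^{\circ}$. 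The one genuinely different step is claim \ref{claim1}: you invoke the standard continuation (escape) lemma --- a forward orbit of a locally Lipschitz field that stays in a compact set exists for all forward time --- whereas the paper factors $r_1\tilde f=(r_1-r_1^{\star})^m h$ with $h(\mu,r_1^{\star})\neq0$, separates variables, and shows by partial fractions that the elapsed time diverges as $r_1\to r_1^{\star}$. Your route is shorter and avoids the integration bookkeeping; the paper's computation additionally exhibits that the equilibrium is only reached in infinite time, a by-product it does not subsequently use. Finally, the obstacle you flag in claim \ref{claim4} is real: the paper's distance computation tacitly assumes the trajectory's zero-frequency angles coincide with the $\theta^{\circ}_{n_j}$ defining $\Gamma$; a nearby point of $\mathcal{M}_{\pmb{c}}$ with a perturbed frozen angle contributes a non-decaying term to $d(\cdot,\Gamma)$ and never converges to $\Gamma$. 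The paper does not address this, so your reading --- that $\mathcal{M}_{\pmb{c}}$-asymptotic stability must be tested through the radial coordinate on the sub-leaf with matching frozen angles --- is the needed repair rather than a gap in your own argument.
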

\begin{proof}
Claim \ref{monotonicity}. By equation \eqref{Eq2}, we have
\begin{eqnarray*}
&r_i(t, \mu)= \frac{c_i}{c_1}r_1^{\circ}\exp\left(\int_{0}^{t}\tilde{f}(\mu, r_1)\mathrm{d}\tau\right).&
\end{eqnarray*}
Hence, \(r_1^{\circ}r_i(t, \mu)>0\) for \(0\leq i\leq n\) and all \(t>0\). Now suppose that the nonzero solution \(r_1(t, \mu)\) is not monotonic. Therefore, there exist \(t_1\) and \(t_2\) such that \(0<t_1 <t_2\),
\begin{eqnarray}\label{MonotonicProof}
&\frac{\mathrm{d}}{\mathrm{d}t}r_1(t_1, \mu)=0 \text{\quad and \quad} \left\vert r_1(t_1, \mu)-r_1(t_2, \mu) \right\vert>0.&
\end{eqnarray}
Denote \(\varphi(t, \mu, r_1^{\circ})\) for the flow of \eqref{k-thEqPolarReduced}, where \(\varphi(0, \mu, r_1^{\circ})=r_1^{\circ}\). Thus, \(\varphi_1(t_1, \mu, r_1^{\circ})\) is an equilibrium for \eqref{k-thEqPolarReduced} due to \(\tilde{f}(\mu, \varphi_1(t_1, \mu, r_1^{\circ}))=0.\) Since \(r_1(t_2, \mu)=\varphi_1(t_2-t_1, \mu, \varphi_1(t_1, \mu, r_1^{\circ}))=r_1(t_1, \mu),\) a contradiction arises with the inequality in \eqref{MonotonicProof}.

Claim \ref{claim1}. Let \(r^{\star}_{1}\) be an equilibrium for the scalar equation \eqref{k-thEqPolarReduced} and the non-equilibrium solution \(r_1\) converge to \(r^{\star}_{1}.\) Therefore,  \(r^{\star}_{1}\tilde{f}(\mu, r^{\star}_{1})=0.\) So, there exists a maximum integer \(m\in \mathbb{N}\) such that the polynomial \(r_{1}\tilde{f}\) can be divided by \((r_1- r^{\star}_{1})^m\). Hence, there is a \(h\in \mathbb{R}[\mu, {r_1}]\) such that \(r_{1}\tilde{f}=(r_1- r^{\star}_{1})^mh\) and \(h(\mu, r^{\star}_{1})\neq 0\). By equation \eqref{k-thEqPolarReduced}, there exist real constants \(a_i\) for \(1\leq i\leq m\) and \(h^{\prime}\in \mathbb{R}[\mu, {r_1}]\) so that
\begin{eqnarray*}
&t=\int^{r_1(t, \mu)}_{r_1(0,\mu)}\!\frac{{\rm d}r_1}{(r_1\!-\!r^{\star}_{1})^mh}\!=\!\int^{r_1(t, \mu)}_{r_1(0,\mu)}\!\left(\sum_{i=1}^{m}\frac{a_i}{(r_1\!-\!r^{\star}_{1})^i}\!+\!\frac{h^{\prime}}{h}\right)\!{\rm d}r_1.
\eas
Integrating the later equation, we have
\bas
&\left(\sum_{i=1}^{m-1}\frac{-a_{i+1}(i)^{-1}}{(r_1(t, \mu)\!-\!r^{\star}_{1})^{i}}\!+a_1\ln|r_1(t, \mu)\!-\!r^{\star}_{1}|\right)+\left(\sum_{i=1}^{m-1}\frac{a_{i+1}(i)^{-1}}{(r_1^\circ\!-\!r^{\star}_{1})^{i}}\! -a_1\ln |r_1(0, \mu)\!-\!r^{\star}_{1}|+\!\int^{r_1(t, \mu)}_{r_1(0,\mu)}\!\frac{h^{\prime} {\rm d}r_1}{h}\right).&
\end{eqnarray*} Since the function \(h\) is distanced from zero when \(r_1(t, \mu)\) converges to \(r^{\star}_{1},\) the expression in the second big-parentheses is bounded. Our argument here is that \(r_1(t, \mu)\) is a monotonic function and there is no  any equilibrium point between \(r_1^\circ\) and \(r_1^{\star}.\) Therefore,  \(h(\mu, r_1)\neq 0\) for all \(r_1\) between \(r_1^\circ\) and \(r_1^{\star}.\) The limit of the expression in the first big-parentheses is unbounded. The positive or negative infinity depends on the sign of \(a_{m}\) and increasing/decreasing type of the function \(r_1(t, \mu)\). However, the convergence occurs in forward time. Hence, the right hand side (and time) must converge to positive infinity. This concludes the claim on the maximal forward-time interval of the existence.

Claim \ref{claim3}. Consider equation \eqref{ReducedEqu2}. Hence, the equilibria are obtained by solving \(r_1\tilde{f}(\mu, r_1)=0\) for \(r=r_1^{\star}.\) By Claim 3 in Lemma \ref{lem1}, \(\frac{r_1^{\star}}{c_1}\cb\) is an equilibrium for \eqref{ReducedEqu2}. Let \(r_1^{\star}\) be asymptotically stable for equation \eqref{k-thEqPolarReduced}. Hence, for every \(\epsilon>0,\) there is a \(\delta(\epsilon)>0\) such that the solution \(r_1(t, \mu)\) associated with every initial condition \(r_1(0, \mu)= r_1^{\circ},\) \(|r_1^{\circ}-r_1^{\star}|<\delta\),  satisfies \(|r_1(t, \mu)-r_1^{\star}|<\epsilon\) for all \(t\geq0.\) Moreover, there exists \(\eta(\epsilon)>0\) such that \(|r_1(t, \mu)-r_1^{\star}|\rightarrow 0\) as \(t\rightarrow \infty\) for all \(|r_1^{\circ}-r_1^{\star}|<\eta.\) Since
\begin{eqnarray*}
&| r_i(t, \mu) -r_i^{\star}|=|\frac{c_i}{c_1}r_1(t, \mu)-r_i^{\star}|= \frac{c_i}{c_1}|r_1(t, \mu)-\frac{c_1}{c_i}r_i^{\star}|=\frac{c_i}{c_1}|r_1(t, \mu)-r_1^{\star}|,&
\end{eqnarray*} for every \(\epsilon^{\prime}>0,\) we take
\begin{eqnarray*}
&\delta^{\prime}(\epsilon^{\prime}):= \sum_{i=0}^{n}\frac{c_i}{c_1}\delta\big(\frac{\epsilon^{\prime}}{\sum_{i=0}^{n}\frac{c_i}{c_1}}\big)\quad \hbox{ and } \quad \eta^{\prime}(\epsilon^{\prime}):=\sum_{i=0}^{n}\frac{c_i}{c_1}\eta\big(\frac{\epsilon^{\prime}}{\sum_{i=0}^{n}\frac{c_i}{c_1}}\big).&
\end{eqnarray*} The proof is now completed by
\bas
&\|\mathbf{r}(t, \mu)- \frac{r_1^{\star}}{c_1}{\pmb{c}}\|_2\leq \sum^n_{i=0}|r_i(t, \mu)- r_i^{\star}|= \sum^n_{i=0}\frac{c_i}{c_1}|r_1- r_1^{\star}|< \epsilon^{\prime}.&
\eas The converse claim is proved by a similar argument and taking
\begin{eqnarray*}
&\delta(\epsilon):= \frac{\delta^{\prime}(\epsilon\sum_{i=0}^{n}\frac{c_i}{c_1})}{\sum_{i=0}^{n}\frac{c_i}{c_1}}\quad\hbox{ and } \quad \eta(\epsilon):= \frac{\eta^{\prime}(\epsilon\sum_{i=0}^{n}\frac{c_i}{c_1})}{\sum_{i=0}^{n}\frac{c_i}{c_1}}.&
\end{eqnarray*} The unstable claim is simply the contraposition of the stable claim.

Claim \ref{claim4}. Let \(\Gamma\) be an invariant manifold and \((\x, \y)\in \Gamma\). By equations \eqref{x_i, y_i}, we have \(r_i(t, \mu)=\frac{r^\star_1 c_i}{c_1}\) for \(0\leq i\leq n,\) and \(\frac{\rm d}{{\rm d}t}\mathbf{r}=0\). Thus,
\(\frac{r^\star_1}{c_1}\cb\) and \(r^\star_1\) are the equilibria for \eqref{ReducedEqu2} and
\eqref{k-thEqPolarReduced}, respectively. For the \(\mathcal{M}_{\pmb{c}}\)-asymptotical stability, we first introduce the distance
\begin{eqnarray*}
&d( (\x, \y), \Gamma)\!:=\!\inf\limits_{\scriptsize (\mathbf{u}, \mathbf{v})\in\Gamma}\!d((\x, \y), (\mathbf{u}, \mathbf{v}))
\!:=\!\inf\limits_{\scriptsize (\mathbf{u}, \mathbf{v})\in\Gamma}\sum\limits_{i=0}^{n}\!\left((x_i\!-\!u_i)^2\!+\!(y_i\!-\!v_i)^2\right).&
\end{eqnarray*} Now we claim that
\begin{eqnarray}\label{distance-norm}
&d((\x(t, \mu), \y(t, \mu)), \Gamma)=\mylvert \mathbf{r}(t, \mu)-\frac{r_1^{\star}}{c_1}{\pmb{c}}\myrvert_2.&
\end{eqnarray} This is due to
\begin{eqnarray*}
&d( (\x, \y), \Gamma) = \sum_{i=0, i\neq n_j}^{n}\inf\limits_{\substack{a_i\in \mathbb{R}}} \left((r_i\cos(\omega_it +  \theta^{\circ}_i) - \frac{c_ir_1^{\star}}{c_1}\cos a_i)^2 + (r_i\sin(\omega_it  +  \theta^{\circ}_i)- \frac{c_ir_1^{\star}}{c_1}\sin a_i)^2\right)&\\
& = \sum_{i=0, i\neq n_j}^{n} \big((r_i- \frac{c_ir_1^{\star}}{c_1})^2\cos^2(\omega_it +  \theta^{\circ}_i) + (r_i- \frac{c_ir_1^{\star}}{c_1})^2\sin^2(\omega_it  +  \theta^{\circ}_i)\big)= \sum_{i=0}^{n}\left(r_i(t, \mu) - \frac{c_ir_1^{\star}}{c_1}\right)^2.&
\end{eqnarray*} Note that the interchange of the infimum and summation is justified due to the independence of the expressions with variations of the indices. The infimum here takes place when \(a_i= \omega_it +\theta^{\circ}_i\). Since \(\Gamma\) is \(\mathcal{M}_{\pmb{c}}\)-asymptotically stable (by definition) if and only if for every \(\epsilon>0\) there exists an open set \(N(\epsilon)\subset \mathbb{R}^{2n+2}\) such that \(\Gamma\subset N\) and for all \(((\x(0, \mu),\y(0, \mu))\in N(\epsilon)\cap \mathcal{M}_{\pmb{c}}\) and \(t\geq 0\), \(d((\x(t,\mu),\y(t,\mu)),\Gamma)< \epsilon\) and \(\lim_{t\rightarrow \infty} d((\x(t,\mu), \y(t,\mu)),\Gamma)=0\), the proof is straightforward.
\end{proof}

\begin{rem}
The maximal forward-time (or backward-time) of existence for the solutions of \eqref{EulerianDiff} is not necessarily an infinite interval. For example, let
\(f(\mu, \x, \y): =\alpha(\mu-{x_1}^2-{y_1}^2).\) Thus, the first amplitude equation in \eqref{PolarEq1} follows \(\frac{{\rm d}r_1}{{\rm d}t}=\alpha r_1(\mu-{r_1}^2).\) Let \(0<\mu<{r_1^{\circ}}^2\). Then, the solution and time interval of the existence are given by
\begin{eqnarray*}
&r_1(t, \mu)=\big(\frac{\mu {r_1^{\circ}}^2}{(\mu-{r_1^{\circ}}^2)\exp(\frac{2\mu t}{-\alpha})+{r_1^{\circ}}^2}\big)^{\frac{1}{2}},  \big[\frac{-\alpha}{2\mu}\ln\big(\frac{{r_1^{\circ}}^2}{{r_1^{\circ}}^2-\mu}\big), +\infty\!\big) \; \hbox{ for } \alpha>0,\; \big(\!-\infty,  \frac{\alpha}{2\mu}\ln\big(\frac{{r_1^{\circ}}^2}{{r_1^{\circ}}^2-\mu}\big) \big] \; \hbox{ for } \alpha<0.&
\end{eqnarray*}
When \(\mu>{r_1^{\circ}}^2,\) the time interval of the existence is the whole real line for every \(\alpha\in \mathbb{R}.\)
\end{rem}

Some trajectories of the system \eqref{EulerianDiff} are associated with the timber modeling of notes in section \ref{SecModel}. Our proposed modeling is made such that these trajectories converge to either an equilibrium or to an invariant hypertorus. Therefore, there is no forward time-interval limitation for the existence of simulated solutions. Furthermore, the proposed simulations are always bounded; see Claim \ref{claim1} in Lemma \ref{lem2} and the following corollary.

\begin{cor}\label{cor1} Consider the differential system \eqref{ReducedEqu2}. Then,
\begin{enumerate}
\item [i.] There is a one to one correspondence between positive roots of \(\tilde{f}(\mu, r^{\star}_1)=0\) and the flow-invariant \(n\)-tori with radiuses \(\frac{r_1^{\star}}{c_1}{\pmb{c}}\) on the leaf \(\mathcal{M}_{{\pmb{c}}}\) from system \eqref{EulerianDiff}.
\item[ii.] For a fixed parameter \(\mu,\) the trajectories \(r_i(t, \mu)\) for all \(i\leq n\) are either simultaneously increasing or simultaneously decreasing.
In addition, they either uniformly converge to a nonnegative real number or uniformly diverge to infinity.
\item[iii.] %The flow  \((\x(t, \mu), \y(t, \mu))\) approaches to either an equilibrium or to an invariant hypertorus or infinity.
 When \((\x(t, \mu), \y(t, \mu))\) approaches either to an equilibrium or to an invariant hypertorus, the maximal forward-time interval of the existence for \((\x, \y)\) is the positive real line.
\end{enumerate}
\end{cor}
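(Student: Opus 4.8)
The plan is to reduce all three statements to the scalar equation~\eqref{k-thEqPolarReduced}, exploiting the proportionality \(r_i(t,\mu)=\tfrac{c_i}{c_1}r_1(t,\mu)\) that Claim~\ref{Item3} of Lemma~\ref{lem1} forces on the invariant leaf \(\mathcal{M}_{\pmb{c}}\), and then to read each conclusion off the matching claim of Lemma~\ref{lem2}. For part~(i) I would first observe that the equilibria of~\eqref{k-thEqPolarReduced} are precisely the zeros of \(r_1\tilde{f}(\mu,r_1)\), so the positive equilibria coincide with the positive roots of \(\tilde{f}(\mu,\cdot)\). Claim~\ref{claim4} of Lemma~\ref{lem2} then produces, for each positive equilibrium \(r_1^{\star}\), the flow-invariant torus \(\Gamma\) of~\eqref{eq4.12} with radius vector \(\tfrac{r_1^{\star}}{c_1}\pmb{c}\); when exactly one angular frequency vanishes (as in the musical model with \(\omega_0=0\)) this torus has dimension \(n\), matching the statement. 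Distinct positive roots give distinct radius vectors, since \(\pmb{c}\neq\0\) and \(c_1>0\), so the correspondence is injective, while surjectivity onto invariant tori of the stated form is the ``only if'' direction of Claim~\ref{claim4}.

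For part~(ii) the proportionality \(r_i=\tfrac{c_i}{c_1}r_1\) shows each \(r_i\) is a fixed nonnegative multiple of \(r_1\), so their monotonicity types coincide; Claim~\ref{monotonicity} of Lemma~\ref{lem2} then guarantees that \(r_1\), and hence every \(r_i\), is monotonic on \(E^{loc}_{\mu,\pmb{c}}\), and that all the \(r_i\) increase or decrease simultaneously. A monotonic trajectory that remains nonnegative is either bounded, in which case it converges to a nonnegative limit, or unbounded, in which case it diverges to \(+\infty\); since every \(r_i\) carries the common factor \(r_1\), this dichotomy is inherited uniformly, the respective limits being \(\tfrac{c_i}{c_1}L\) for the common limit \(L\) of \(r_1\).

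For part~(iii) I would note that, by Claims~\ref{claim3} and~\ref{claim4}, the full trajectory \((\x(t,\mu),\y(t,\mu))\) approaches an equilibrium or an invariant hypertorus exactly when \(r_1(t,\mu)\) approaches an equilibrium of~\eqref{k-thEqPolarReduced}. Claim~\ref{claim1} then gives that the maximal forward-time interval for \(r_1\) is \([0,\infty)\); since the angular variables \(\theta_i(t)=\omega_i t+\arctan\tfrac{y_i^{\circ}}{x_i^{\circ}}\) exist for all \(t\geq0\) and the radii \(r_i=\tfrac{c_i}{c_1}r_1\) persist exactly as long as \(r_1\) does, the trajectory reconstructed through~\eqref{x_i, y_i} exists for all \(t\geq0\).

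The proportionality bookkeeping and the monotone-convergence dichotomy are routine. I expect the genuine difficulty to lie in the surjectivity half of part~(i): confirming that every minimal flow-invariant \(n\)-torus lying on \(\mathcal{M}_{\pmb{c}}\) actually arises from~\eqref{eq4.12} for some positive \(r_1^{\star}\), rather than from some other geometric configuration. I would address this using the foliation of \(\mathcal{M}_{\pmb{c}}\) by the constant-radius level sets \(\|(u_i,v_i)\|_2=c_i\,\xi(\mu,t)\) recorded in Claim~\ref{welldef}, which forces any invariant torus sitting on the leaf to have constant radius and hence to be of the stated form.
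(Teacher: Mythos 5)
Your proposal is correct and follows essentially the same route as the paper: both reduce everything to the scalar equation \eqref{k-thEqPolarReduced} via the proportionality \(r_i=\frac{c_i}{c_1}r_1\) on the leaf, obtain part (i) from Claims \ref{monotonicity} and \ref{claim4} of Lemma \ref{lem2}, treat part (ii) by the monotone-bounded dichotomy (which the paper dismisses as trivial), and for part (iii) pass from boundedness of the signals to convergence of \(r_1\) to an equilibrium and then invoke Claim \ref{claim1}. Your added remarks on the torus dimension when exactly one \(\omega_i\) vanishes and on injectivity/surjectivity of the correspondence are finer-grained than the paper's two-line argument but do not constitute a different method.
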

\begin{proof} Item (i) follows from claims \ref{monotonicity} and \ref{claim4} in Lemma \ref{lem2}. Item (ii) is trivial. When \((\x(t, \mu), \y(t, \mu))\) approaches to an invariant hypertorus, signals \(x_i(t, \mu)\) and \(y_i(t, \mu)\) are bounded. Thus, \(r_1(t)\) is bounded and monotonic by Claim \ref{Item1-1} in Lemma \ref{lem2}. Therefore, \(r_1\) converges to \(r_1^{\star}\) for some \(r_1^{\star}\in \mathbb{R}\) as the solutions converge to the hypertorus. By Claim \ref{Item1-1} in Lemma \ref{lem1}, there exists a \({\pmb{c}}\) such that \(\mathbf{r}\) converges to \(\frac{r_1^{\star}}{c_1}{\pmb{c}}.\) By equation \eqref{x_i, y_i}, \((\x(t, \mu), \y(t, \mu))\) converge to \(\Gamma\) given by equation \eqref{eq4.12}. By Claim \ref{claim4} in Lemma \ref{lem2}, \(\tilde{f}(\mu, \bar{r}_1)=0\) and \(\bar{r}_1\) is an equilibrium for \eqref{k-thEqPolarReduced}. Claim \ref{claim1} in Lemma \ref{lem2} completes the proof. When \((\x(t, \mu), \y(t, \mu))\) approaches to the origin, the argument is similar.
\end{proof}
\begin{thm}(Bifurcations)\label{Bifurcations} Consider the differential system  \eqref{EulerianDiff} and equation \eqref{k-thEqPolarReduced} where \(f:= \mu+g,\) \(g\in \mathbb{R}[{x_0}^2+{y_0}^2, \ldots, {x_n}^2+{y_n}^2]\) and \(f(\mu, \0, \0)=\mu.\) Then, the equation \eqref{k-thEqPolarReduced} is not structurally stable at \((\mu, r_1)=(0, 0).\) Indeed, equation \eqref{k-thEqPolarReduced} undergoes either a (non-standard) subcritical pitchfork or supercritical pitchfork bifurcation at the variety
\be
T_{SubP}:=\{\mu\,|\, \mu=0\} \quad ( T_{SupP}:=\{\mu\,|\, \mu=0\}),
\ee respectively. This corresponds with the appearance/disappearance of one \(n\)-hypertorus from the origin when the parameter \(\mu\) changes its sign in the differential system \eqref{EulerianDiff}.
\end{thm}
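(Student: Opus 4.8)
The plan is to push the hypothesis through the reductions already established and then read off a one‑dimensional pitchfork whose positive branch is translated back to a hypertorus via Corollary~\ref{cor1}. First I would record the shape of the reduced right‑hand side. Since $f=\mu+g$ with $g\in\mathbb{R}[{x_0}^2+{y_0}^2,\ldots,{x_n}^2+{y_n}^2]$ and $f(\mu,\0,\0)=\mu$, the companion function $\widehat f$ satisfies $\widehat f(\mu,\cdot)=\mu+\widehat g(\cdot)$ with $\widehat g(\0)=0$; substituting into the definition of $\tilde f$ in \eqref{ReducedEqu2} gives $\tilde f(\mu,r_1)=\mu+\tilde g(r_1)$, where $\tilde g$ is a polynomial in $r_1^2$ with $\tilde g(0)=0$. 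Writing $\tilde g(r_1)=b\,r_1^{2m}+O(r_1^{2m+2})$ for the first nonvanishing coefficient $b\neq 0$, the scalar field becomes $F(\mu,r_1):=r_1\tilde f(\mu,r_1)=\mu r_1+b\,r_1^{2m+1}+\cdots$, which is odd in $r_1$.

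Next I would settle the structural instability. At $(\mu,r_1)=(0,0)$ one has $F(0,0)=0$ and $F_{r_1}(0,0)=\tilde f(0,0)=0$, so $r_1=0$ is a nonhyperbolic equilibrium of \eqref{k-thEqPolarReduced} when $\mu=0$. Because a one‑dimensional polynomial flow on a neighborhood of an equilibrium is structurally stable only when that equilibrium is hyperbolic, this already yields the failure of structural stability; concretely, the arbitrarily small perturbation $r_1\mapsto r_1(\varepsilon+\tilde g(r_1))$ changes the number of equilibria near $0$ as $\varepsilon$ crosses $0$, so no topological conjugacy can be maintained.

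Then I would verify the pitchfork itself through the standard defining conditions applied to $F$: the trivial branch $F(\mu,0)\equiv 0$; the degeneracies $F_{r_1}(0,0)=0$ and $F_{r_1r_1}(0,0)=0$ (the latter forced by oddness of $F$); the transversality $F_{\mu r_1}(0,0)=\tilde f_\mu(0,0)=1\neq0$, which makes $F_{r_1}(\mu,0)=\mu$ change sign so the origin exchanges stability at $\mu=0$; and the nondegeneracy of the leading odd term, with sign governed by $b$. When $m=1$ this is the textbook pitchfork and the cubic coefficient $\tfrac16 F_{r_1r_1r_1}(0,0)=b$ fixes the type: $b<0$ gives the supercritical branch (nonzero equilibria for $\mu>0$, $\mathcal{M}_{\cb}$‑stable) and $b>0$ the subcritical branch (nonzero equilibria for $\mu<0$, $\mathcal{M}_{\cb}$‑unstable); for $m>1$ the same branching persists through the leading term $b\,r_1^{2m+1}$, which is the sense in which the bifurcation is a \emph{non-standard} (degenerate) pitchfork. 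The stability assignments transfer to the leaf by Lemma~\ref{lem2}, claim~\ref{claim4}.

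Finally I would translate to the hypertorus. Solving $\tilde f(\mu,r_1)=0$ gives the nonzero equilibria $r_1^{2m}=-\mu/b+\cdots$, hence a single positive root $r_1^\star(\mu)$ precisely when $-\mu/b>0$. By Corollary~\ref{cor1}(i) each positive root corresponds one‑to‑one with a flow‑invariant $n$‑torus of radius $\tfrac{r_1^\star}{c_1}\cb$ on $\mathcal{M}_{\cb}$ for \eqref{EulerianDiff}; thus exactly one $n$‑hypertorus bifurcates from the origin as $\mu$ changes sign, with $\mathcal{M}_{\cb}$‑stability matching that of $r_1^\star$. The main obstacle I anticipate is conceptual rather than computational: the symmetric pitchfork in $r_1\in\mathbb{R}$ produces the pair $\pm r_1^\star$, yet only the positive member is counted as a torus in Corollary~\ref{cor1}(i), so the equilibrium bifurcation of multiplicity two in $r_1$ must be matched with the birth of a single toral manifold—reconciling this identification, and being careful that $r_1=0$ encodes the origin rather than a torus, is the delicate point; establishing the nonhyperbolicity‑based structural instability cleanly is the only other place requiring care.
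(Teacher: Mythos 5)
Your proposal is correct and follows essentially the same route as the paper's proof: reduce to the scalar amplitude equation, write $\tilde f=\mu+($leading even term$)+\hot$, classify the pitchfork as supercritical or subcritical by the sign of that leading coefficient (your $b<0$/$b>0$ matching the paper's $a<0$/$a>0$), and lift the single positive branch to an $n$-hypertorus via Corollary \ref{cor1}(i). Your version is somewhat more explicit than the paper's (standard pitchfork defining conditions and a nonhyperbolicity argument for structural instability, where the paper argues geometrically from the graph of $\mu(r_1)$ and also records the Jacobian of the full system), but these are presentational rather than substantive differences.
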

\begin{proof} We have \(D_{\x,\y}(\Theta^{\pmb{\omega}}_{\0}+E_f)(\mu, \0)=\diag(J_0, J_1, \ldots, J_n)\) where \({\scriptsize J_i:=\begin{pmatrix}\mu\!&\! -\omega_i\\ \omega_i\!&\!\mu \end{pmatrix}}\). Hence, the origin is an unstable equilibrium for \(\mu>0\) while the origin is locally asymptotically stable for \(\mu<0\). Given the hypothesis, the function \(\tilde{f}\) is read by \(\tilde{f}:=\mu+a{r_1}^{2p}+\hot\) when \(p\geq 1.\) Let \(a<0.\) The graph \(\tilde{f}\) passes through the origin within the \((r_1, \mu)\)-plane. Thereby, the function \(\mu(x)=-a{r_1}^{2p}+\mathcal{O}(2p+2)\) has a local maximum at \(r_1=0.\) Hence, for a sufficiently small neighbourhood of the origin when the parameter \(\mu\) varies from negative to positive, two nonzero asymptotically stable local equilibria are bifurcated from the origin. Thus, a (nonstandard) supercritical pitchfork type of bifurcation occurs for this case. The argument for the case \(a>0\) is similar and corresponds to a subcritical type case. The proof for system \eqref{EulerianDiff} is completed by appealing to Lemma \ref{lem2} and Corollary \ref{cor1}.
\end{proof}

\begin{thm}\label{lem3} Consider the differential equation
\ba\label{EqExm}
&\frac{\mathrm{d} r_1}{\mathrm{d}t}=r_1 \tilde{f}(\mu, r_1), \;\hbox{ where }\, \tilde{f}:=\alpha(\mu+a{r_1}^{2p}+b{r_1}^{2q}),\, \alpha\in\mathbb{R}_{>0}, \, p, q\in \mathbb{N},\, q>p,\, \hbox{ and } ab< 0.\qquad&
\ea When parameter \(\mu\) crosses the transition variety
\be T_{2SN}:=\left\{\mu\,\Big|\, \mu=\mu^{\star}=-a\left(\frac{-pa}{qb}\right)^{\frac{p}{q-p}}\left(\frac{q-p}{q}\right)\right\},\ee equation \eqref{k-thEqPolarReduced} exhibits a double (non-standard) saddle-node type of bifurcation. When \(a<0,\) there are two pairs of equilibria along with the origin when \(0<\mu< \mu^\star.\) These pairs coalesce and disappear when \(\mu\) crosses \(T_{2SN}\) and then \(\mu>\mu^\star.\) For \(a>0,\) there are four equilibria (apart from the origin) when \(\mu^\star< \mu<0\) and the origin is the only equilibrium for \(\mu<\mu^\star.\) When \(a<0.\), in terms of the differential system \eqref{EulerianDiff} reduced on \(\mathcal{M}_{\pmb{c}},\) two hypertori coalesce and disappear when the parameter \(\mu\) varies from
\(0<\mu< \mu^\star\)  to \(\mu>\mu^\star\).

\end{thm}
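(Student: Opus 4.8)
The plan is to reduce the equilibrium count and the bifurcation analysis of the nonzero equilibria of \eqref{k-thEqPolarReduced} to a one-variable fold problem. Since \(\alpha>0\) only rescales time, it changes neither the location nor the stability type of equilibria, and the factor \(r_1\) shows that the origin is always an equilibrium; the \emph{nonzero} equilibria of \(\dot r_1=r_1\tilde f(\mu,r_1)\) are thus exactly the real roots of \(\mu+a r_1^{2p}+b r_1^{2q}=0\). Because this expression is even in \(r_1\), I would substitute \(s:=r_1^{2}\ge 0\) and study \(G(s):=a s^{p}+b s^{q}\), so that nonzero equilibria correspond to the positive solutions of \(\mu=-G(s)\); each such \(s>0\) yields the symmetric pair \(r_1=\pm\sqrt{s}\), while \(s=0\) is the origin. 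Graphing \(\mu=-G(s)\) in the \((s,\mu)\)-plane then reduces the whole statement to counting, for each \(\mu\), the positive \(s\)-roots and locating the fold of this graph.

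Next I would locate the fold. Differentiating gives \(G'(s)=s^{p-1}\bigl(pa+qb\,s^{q-p}\bigr)\), and since \(ab<0\) the quantity \(-pa/(qb)\) is positive, so \(G\) has a unique positive critical point \(s_c=\bigl(-pa/(qb)\bigr)^{1/(q-p)}\). Using \(pa=-qb\,s_c^{q-p}\) to simplify, I would compute \(G(s_c)=a\,s_c^{p}\,(q-p)/q\) and verify that \(\mu^{\star}=-G(s_c)\) equals the value quoted in the statement. The crucial nondegeneracy step is to confirm that the fold is quadratic: substituting the same relation into \(G''\) yields \(G''(s_c)=-p(q-p)\,a\,s_c^{p-2}\neq 0\) (here \(q>p\) and \(s_c>0\) are used). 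Hence \(s\mapsto -G(s)\) has a genuine nondegenerate extremum at \(s_c\), so two positive \(s\)-roots merge transversally as \(\mu\) crosses \(\mu^{\star}\); this is the (non-standard) saddle-node mechanism of the theorem, and by the \(r_1\mapsto-r_1\) symmetry the corresponding fold occurs simultaneously at \(+\sqrt{s_c}\) and \(-\sqrt{s_c}\), which is exactly what ``double saddle-node'' records.

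The equilibrium counts then follow by case analysis on the sign of \(a\). For \(a<0\) (so \(b>0\)) one has \(G(0)=0\), \(G''(s_c)>0\) so \(s_c\) is a global minimum with \(G(s_c)<0\), and \(G(s)\to+\infty\); reading the horizontal line \(\mu=-G(s)\) shows two positive \(s\)-roots exactly for \(0<\mu<\mu^{\star}\), a single double root at \(\mu=\mu^{\star}\), and none for \(\mu>\mu^{\star}\). Translating through \(r_1=\pm\sqrt{s}\) gives the two pairs of nonzero equilibria for \(0<\mu<\mu^{\star}\) that coalesce and disappear as \(\mu\) increases past \(\mu^{\star}\). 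For \(a>0\) (so \(b<0\)) the graph is flipped: \(s_c\) is a global maximum with \(G(s_c)>0\), giving two positive \(s\)-roots, hence four nonzero equilibria, for \(\mu^{\star}<\mu<0\), and none for \(\mu<\mu^{\star}\), leaving the origin as the only equilibrium. To obtain the last assertion I would invoke Corollary \ref{cor1}(i) together with claim \ref{claim4} of Lemma \ref{lem2}: positive roots of \(\tilde f(\mu,r_1)=0\) are in one-to-one correspondence with flow-invariant \(n\)-tori on \(\mathcal{M}_{\pmb{c}}\), so for \(a<0\) the two positive roots present for \(0<\mu<\mu^{\star}\) are two hypertori that merge at \(r_1=\sqrt{s_c}\) and vanish for \(\mu>\mu^{\star}\).

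I expect the main obstacle to be bookkeeping rather than analysis: one must keep three layers aligned — the positive \(s\)-roots, the doubled \(r_1=\pm\sqrt{s}\) equilibria of the scalar equation (the source of the word ``double''), and the hypertori on \(\mathcal{M}_{\pmb{c}}\), which are indexed only by the positive roots. The one genuinely computational point is the simplification using \(G'(s_c)=0\), which produces both the clean closed form for \(\mu^{\star}\) and the sign of \(G''(s_c)\); getting these consistent is what certifies that the degeneracy at \(\mu^{\star}\) is exactly a (double) saddle-node and not a bifurcation of higher codimension.
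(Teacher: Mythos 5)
Your proposal is correct and follows essentially the same route as the paper: both view the nonzero equilibria as the graph of \(\mu\) against the equilibrium amplitude, locate the unique critical point via the first derivative, certify nondegeneracy via the second derivative, and then transfer the result to hypertori on \(\mathcal{M}_{\pmb{c}}\) through the correspondence in Lemma \ref{lem2} and Corollary \ref{cor1}. The only cosmetic difference is your substitution \(s=r_1^{2}\), where the paper works directly with \(\mu(r_1)=-ar_1^{2p}-br_1^{2q}\) and the symmetric pair \(r_1^{\star,\pm}\); your computed values of \(s_c\), \(\mu^{\star}\), and the sign of the second derivative all agree with the paper's.
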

\begin{proof}
Take \(\mu(r_1)=-a{r_1}^{2p}-b{r_1}^{2q},\) \(r_1^{\star, \pm}:=\pm\left(\frac{-pa}{qb}\right)^{\frac{1}{2(q-p)}}\) and \(\mu(r_1^{\star, \pm})=-a\left(\frac{-pa}{qb}\right)^{\frac{p}{q-p}}\left(\frac{q-p}{q}\right).\) We have
\begin{eqnarray*}
&\frac{\mathrm{d}}{\mathrm{d}r_1}\mu\!\left(r_1^{\star, \pm}\right)=0, \quad\frac{\mathrm{d}^2}{\mathrm{d}{r_1}^2}\mu\!\left(r_1^{\star, \pm}\right)=4a(q-p)\left(\frac{-pa}{qb}\right)^{\frac{p-1}{q-p}}\neq 0.&
\end{eqnarray*}
Hence, \(\tilde{f}\) has a global minimum (maximum) at \(r_1=r_1^{\star, \pm}\) for \(a<0\) \((a>0).\) Therefore, two saddle node type of bifurcations take place simultaneously at \(\mu=\mu^\star\); see Figure \ref{BifDiag}. Theorem \ref{Bifurcations} concludes the claim on the system \eqref{EulerianDiff} restricted on \(\mathcal{M}_{\pmb{c}}.\)
\end{proof}
\begin{exm} Consider the differential equation \eqref{EqExm} for \(\alpha:=23,p:=1, q:=2, |a|:=1,\) and \(|b|:=2.15.\) These constants correspond with the values used in section \ref{SecModel} for modeling \(C\sharp 4\) played by violin. % (where \(\alpha:= 23,\) \(|a|:=23,\) \(|b|:=49.45,\) and \(\nu:=23\mu\)).
  When \((a, b):=(1, -2.15),\) a supercritical pitchfork bifurcation occurs at \(T_{SupP}\{\mu| {\mu}=0\}\) and for \((a, b):=(-1, 2.15),\) a subcritical pitchfork takes place at \(T_{SubP}\{\mu| {\mu}=0\}\); see Figure \ref{SubP2SN} and \ref{SupP2SN}. A double saddle-node bifurcation occurs at \(T_{2SN}= \{\mu| \mu=\mu^\star\}.\) This is associated with bifurcation points \((\mu^\star, r_1^{\star, \pm})=(-0.116, \pm 0.5).\) Now consider \(b=0\) and \(a=\pm 1.\) In this case, there is a supercritical pitchfork bifurcation at \(\mu=0\) when \(a=-1\) and a subcritical pitchfork bifurcation when \(a= 1\); see Figures \ref{SupSubP}.
\end{exm}
\begin{figure}[t]
\centering
%\subfloat{\includegraphics[width=6in]{BifDiagDraft3_Legend2.pdf}}\\ \vspace{-0.29in}
\subfloat[Here, \(a:=1, b:=-2.15\). A hysteresis cycle occurs by increasing and decreasing \(\mu.\) \label{SubP2SN}]{\includegraphics[width=2.3in, height=2.3in]{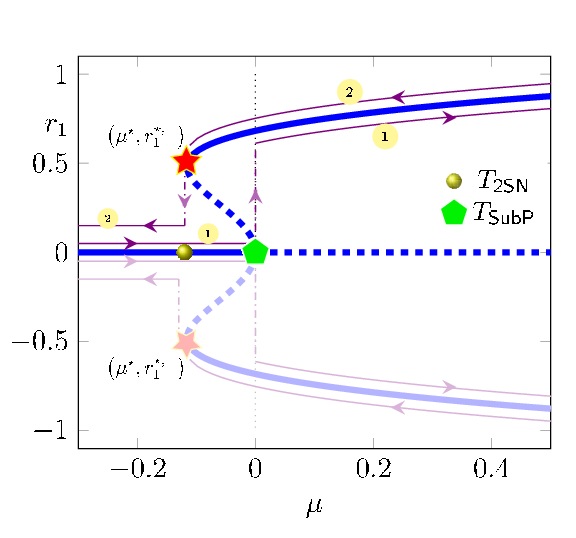}}\;
\subfloat[The case \(a:=-1, b:=2.15\) and phase portraits for fixed values of \(\mu.\) \label{SupP2SN}]{\includegraphics[width=2.2in,height=2.3in]{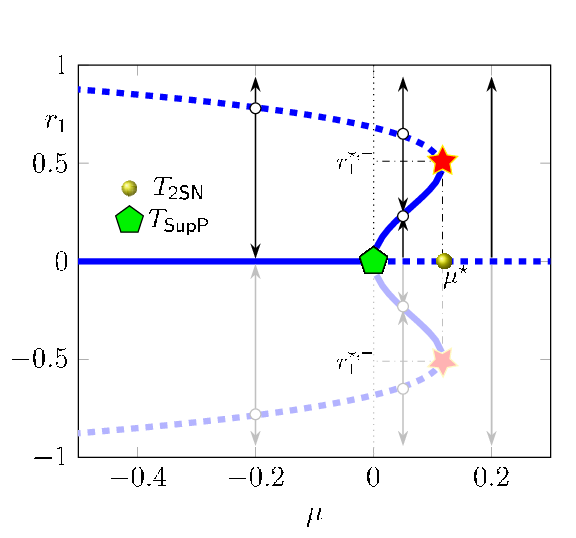}}\;
\subfloat[The supercritical and subcritical pitchfork bifurcations for \(a:=\pm 1, \) and \(b:=0,\) respectively. \label{SupSubP}]
{\includegraphics[width=2.24in,height=2.2in]{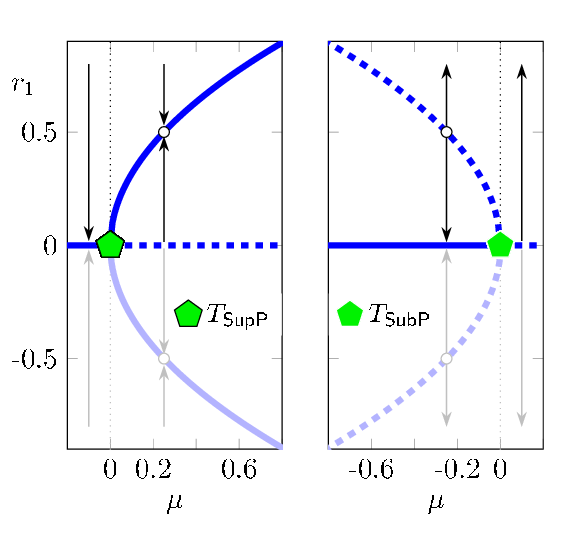}}
\caption{Bifurcation diagrams associated with equation \eqref{EqExm}. }\label{BifDiag}
\end{figure}

\begin{rem}[{\it Hysteresis cycle}: quasi-static variation of \(\mu\) and convergence switching] Consider equation \eqref{EqExm} when \(a>0\) and \(b<0\); for an instance see Figure \ref{SubP2SN}. We use a quasi-static variation (piecewise-constant) of the parameter \(\mu\) and observe the asymptotic behavior of \eqref{EqExm}.
We initially assume that \(0<r_1^{\circ}<r_1^{\star}\) and \(\mu<\mu^\star<0.\) Then, the differential equation has five equilibria: the origin, two positive and two negative equilibria. The origin is asymptotically stable and the initial condition guarantees the convergence to the origin. Now we consider to quasi-statically increase the parameter \(\mu\); see curve \tikz[baseline=(char.base)]{\node[shape=circle,fill=yellow!50, scale=0.8,draw=yellow!50,inner sep=2pt] (char) {1};} in Figure \ref{SubP2SN}. As long as the parameter \(\mu\) is negative, the trajectories converge to the origin. However, as soon as the parameter \(\mu\) changes its sign to positive, the origin becomes unstable and two equilibria (one positive and one negative) coalesce with the origin and disappear. Two other equilibria are distanced from the origin and becomes asymptotically stable. The positive equilibrium is larger than \(r_1^{\star}\). Since the initial condition is positive, the convergence of the trajectory switches to the positive equilibrium.
The positive asymptotically stable equilibrium continues to exists when the parameter \(\mu\) increases any further. The controller parameter \(\mu\) manages the amplitude sizes of the simulated sound. Here, the simulated amplitudes of the harmonic partials and the radiuses of the corresponding flow-invariant hypertori in the differential system increase (decrease) when we amplify (reduce) the parameter \(\mu\) within the positive real line.

Now consider to decrease the parameter \(\mu\) while  \(r_1^{\circ}>r_1^{\star}\); see curve \tikz[baseline=(char.base)]{\node[shape=circle,fill=yellow!50, scale=0.8,draw=yellow!50,inner sep=2pt] (char) {2};} in Figure \ref{SubP2SN}. When the parameter \(\mu\) changes its sign and \(\mu^\star<\mu<0\), there are three asymptotically stable equilibria (a zero, a negative and a positive) and two unstable equilibria (one is positive and one is negative) for \eqref{EqExm}. However, the initial condition is above the unstable positive equilibrium and thus, the trajectory continues to converge to the positive asymptotically stable equilibrium. When the parameter \(\mu\) further decreases so that \(\mu<\mu^\star,\) all non-zero equilibria vanish through a double saddle-node bifurcation and the differential equation only has an (asymptotically stable) equilibrium at the origin. Therefore, the convergence of solutions switches back to the origin and completes the hysteresis cycle. These consecutive changes of stabilities, the jumping up and falling down of the asymptotically stable equilibria cause a hysteresis dynamics for the trajectories of the differential equation. By item (i) in Corollary \ref{cor1}, the hysteresis cycle corresponding with system \eqref{EulerianDiff} reduced on leaf \(\mathcal{M}_{{\pmb{c}}}\) involves with bifurcations of asymptotically stable hypertori. These type of hysteresis cycles occur in the dynamics of the musical notes when there are two steady-states in the sustain level of the note. For an instance, we observe the hysteresis in Section \ref{SecModel} when we deal with \(C\sharp 4\) played by violin.
\end{rem}

\section{Estimated peaks of the amplitude spectrum }\label{Sec4}

Every musical note contains four properties; namely, pitch, duration, loudness  and timbre. {\it Spectral} and {\it temporal} envelops are two of the main characteristics of timbre. Temporal envelop demonstrates the changes of note's amplitude over time. Spectral envelop is a curve in the frequency-amplitude plane, where it smoothly interpolates the partial peaks of the fourier transform of the audio signal. The estimation of the individual partial peaks and their ratio to the amplitude of the fundamental frequency are sufficient for an accurate approximation of the spectral envelop; \eg see \cite{SensationsOfTone,MathematicsAndMusic}. Therefore, an accurate simulation of timber via spectral and temporal envelops collectively addresses all four properties of a note.

\begin{lem}\label{lem4}%[{Amplitude spectrum properties}]
Consider the differential system \eqref{EulerianDiff}, \(f\in \mathbb{R}[\mu,{x_0}^2+{y_0}^2, \ldots, {x_n}^2+{y_n}^2],\) and \(\mathcal{F}(r_k)(\omega, \mu)\) as the Fourier transform of the amplitude signal \(r_k\) for \(0\leq k\leq n\) over \(0\leq t\leq \tau.\) Then,
 \begin{enumerate}
   \item\label{411Item1}  \(|\omega \mathcal{F}(r_k)(\omega, \mu)|< 2\sqrt{2}\max\{r_k^{\circ}, r_k(\tau, \mu)\}.\)
   \item\label{411Item2} The unique global maximum (peak) point of the amplitude spectrum for \(r_k\) is \(\mathcal{F}(r_k)(0, \mu)\).
 \end{enumerate}
\end{lem}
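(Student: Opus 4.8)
The plan is to prove both claims by analyzing the Fourier transform of the monotone amplitude signal $r_k(t,\mu)$ over the finite window $[0,\tau]$. The key structural fact, inherited from Claim \ref{monotonicity} of Lemma \ref{lem2}, is that each $r_k(t,\mu)$ is a \emph{monotonic} function of $t$ that keeps a fixed sign; this monotonicity is what will drive both parts. Throughout I would write $\mathcal{F}(r_k)(\omega,\mu)=\int_0^\tau r_k(t,\mu)\,e^{-i\omega t}\,\mathrm{d}t$ and treat $\omega\neq 0$ and $\omega=0$ separately.

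For claim \ref{411Item1}, the natural move is integration by parts to produce the factor $\omega$ explicitly: I would write
\begin{eqnarray*}
\omega\,\mathcal{F}(r_k)(\omega,\mu)&=&\omega\int_0^\tau r_k(t,\mu)\,e^{-i\omega t}\,\mathrm{d}t = i\Big(r_k(\tau,\mu)e^{-i\omega\tau}-r_k^{\circ}\Big)-i\int_0^\tau \frac{\mathrm{d}r_k}{\mathrm{d}t}\,e^{-i\omega t}\,\mathrm{d}t,
\end{eqnarray*}
so the boundary term is bounded by $|r_k(\tau,\mu)|+|r_k^{\circ}|$. The integral term is where monotonicity is essential: since $\frac{\mathrm{d}r_k}{\mathrm{d}t}$ does not change sign, $\big|\int_0^\tau \frac{\mathrm{d}r_k}{\mathrm{d}t}e^{-i\omega t}\mathrm{d}t\big|\leq \int_0^\tau \big|\frac{\mathrm{d}r_k}{\mathrm{d}t}\big|\,\mathrm{d}t=|r_k(\tau,\mu)-r_k^{\circ}|$, a telescoping that would fail for a non-monotone signal. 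Combining the two bounds and using $|r_k(\tau,\mu)-r_k^{\circ}|\leq |r_k(\tau,\mu)|+|r_k^{\circ}|$, one gets a bound in terms of $\max\{r_k^{\circ},r_k(\tau,\mu)\}$; a careful accounting of the constants (the real and imaginary parts each contribute, which is the source of the $\sqrt{2}$) should yield the stated $2\sqrt{2}\max\{r_k^{\circ},r_k(\tau,\mu)\}$.

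For claim \ref{411Item2}, I would argue that $\mathcal{F}(r_k)(0,\mu)=\int_0^\tau r_k(t,\mu)\,\mathrm{d}t$ is real and, since $r_k$ has a constant sign, equals $\pm\int_0^\tau |r_k(t,\mu)|\,\mathrm{d}t$, i.e.\ its modulus is $\int_0^\tau |r_k(t,\mu)|\,\mathrm{d}t$. For any $\omega$ the triangle inequality gives $|\mathcal{F}(r_k)(\omega,\mu)|\leq\int_0^\tau|r_k(t,\mu)|\,\mathrm{d}t=|\mathcal{F}(r_k)(0,\mu)|$, so $\omega=0$ is a global maximum; this is the standard DC-dominance of a single-signed signal. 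The remaining work is \emph{uniqueness}: for $\omega\neq 0$ the inequality is strict, because equality in the triangle inequality would force $r_k(t,\mu)e^{-i\omega t}$ to have constant argument on a set of full measure, which is incompatible with $r_k$ being real-valued and single-signed while $e^{-i\omega t}$ genuinely rotates on $[0,\tau]$. I expect this strictness argument to be the main obstacle, since it requires ruling out degenerate cases (e.g.\ $r_k\equiv 0$, handled separately) and making the ``constant argument forces $\omega=0$'' step rigorous rather than merely plausible.
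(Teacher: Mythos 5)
Your proof is correct, but both halves take a genuinely different route from the paper's. For claim \ref{411Item1} the paper does not integrate by parts: it splits \(\mathcal{F}(r_k)\) into real and imaginary parts and applies Bonnet's second mean value theorem to each, using monotonicity and nonnegativity of \(r_k\) to pull \(r_k(\tau,\mu)\) (or \(r_k^{\circ}\)) out of the integrals; multiplying by \(\omega^2\), the resulting trigonometric expressions are crudely bounded by \(8\,r_k(\tau,\mu)^2\), which is where the \(2\sqrt{2}\) actually comes from. Your integration-by-parts argument uses monotonicity instead through \(\int_0^\tau\vert\dot r_k\vert\,\mathrm{d}t=\vert r_k(\tau,\mu)-r_k^{\circ}\vert\) and in fact yields the sharper bound \(2\max\{r_k^{\circ},r_k(\tau,\mu)\}\) uniformly in the increasing and decreasing cases, so no \(\sqrt{2}\) is needed at all and the stated strict inequality follows whenever \(r_k\not\equiv 0\). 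For claim \ref{411Item2} the \(\leq\) step is identical, but for uniqueness the paper avoids the equality analysis of the triangle inequality: it writes \(\vert\mathcal{F}(r_k)(\omega,\mu)\vert^2-\vert\mathcal{F}(r_k)(0,\mu)\vert^2\) as a double integral of \(r_k(t)r_k(t')(\cos\omega(t-t')-1)\) and bounds it above by \(\min\{r_k^{\circ},r_k(\tau,\mu)\}^2\,\tfrac{4}{\omega^2}\bigl(\sin^2(\tfrac{\omega\tau}{2})-(\tfrac{\omega\tau}{2})^2\bigr)<0\), which gives a quantitative gap between the DC peak and all off-DC values rather than mere strictness (a gap that is implicitly useful for the peak estimates of Corollary \ref{cor2}). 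Your equality-case argument is sound -- constant argument of \(r_k(t)e^{-\mathbf{i}\omega t}\) on an interval of positive measure forces \(\omega=0\) -- and, like the paper's computation, it silently requires \(r_k\not\equiv 0\); for the Eulerian system this holds whenever \(r_k^{\circ}\neq 0\), since \(r=0\) is invariant for the radial equation.
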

\begin{proof}
Claim \ref{411Item1}. We have
\begin{eqnarray}\label{Rkomega}
&\mathcal{F}(r_k)(\omega, \mu)=\int_{0}^{\tau}r_k(t, \mu)\exp(-\mathbf{i}\omega t)\,\mathrm{d}t=\int_{0}^{\tau}r_k(t, \mu) \cos\omega t\,\mathrm{d}t-\mathbf{i}\int_{0}^{\tau}r_k(t, \mu)\sin\omega t\,\mathrm{d}t.&
\end{eqnarray}
Since \(r_k\) is the \(k+1\)-th component of a solution of the differential equation \eqref{PolarEq1}, \(r_k\) is non-negative and a strictly monotonic function for fixed values of \(\mu\). Now assume that \(r_k(t, \mu)\) is an increasing function. By \emph{Bonnet Theorem} (the second mean value theorem) there exist \(t_1, t_2\in [0, \tau]\) such that
\begin{eqnarray*}
&\int_{0}^{\tau}r_k(t, \mu) \cos\omega t\, \mathrm{d}t= r_k(\tau, \mu) \int_{t_1}^{\tau}\cos\omega t\, \mathrm{d}t, \quad \int_{0}^{\tau}r_k(t, \mu) \sin\omega t\, \mathrm{d}t= r_k(\tau, \mu)\int_{t_2}^{\tau}\sin\omega t\, \mathrm{d}t.&
\end{eqnarray*} %Direct calculations give the following results
%\begin{eqnarray*}
%&t_1:=\tau-\frac{1}{r_1(\tau)}\int_{0}^{\tau}r_1(t, \mu)\mathrm{d}t,\quad t_2:=\left(\tau^2-\frac{2}{r_1(\tau)}\int_{0}^{\tau}tr_1(t, \mu)\mathrm{d}t\right)^\frac{1}{2}.&
%\end{eqnarray*}
The above relations yield
\begin{eqnarray*}
&\mathcal{F}(r_k)(\omega, \mu)=r_k(\tau, \mu)(\tau\sinc\,(\omega\tau)-t_1\sinc\,(\omega t_1)+\mathbf{i}\,\tau\,\sin(\frac{\omega\tau}{2})\sinc(\frac{\omega\tau}{2})-\mathbf{i}\,t_2\,\sin(\frac{\omega t_2}{2})\sinc(\frac{\omega t_2}{2})),&
%&=&r(\tau)\left(\frac{\sin\omega\tau-\sin\omega t_1}{\omega}\right)+r(\tau)\mathbf{i}\left(\frac{\cos\omega\tau-\cos\omega t_1}{\omega}\right)\\
\end{eqnarray*} where the function \(\sinc\,\alpha\) is defined by \(\frac{\sin\alpha}{\alpha}\) for \(\alpha\ne 0\) and is \(1\) for \(\alpha=0\). Let \(\omega \neq 0.\) We have
\begin{eqnarray}\label{ineq1}
&\omega^2|\mathcal{F}(r_k)(\omega, \mu)|^2={r_k(\tau, \mu)}^2((\sin\omega\tau-\sin\omega t_1)^2+(\cos\omega\tau-\cos\omega t_2)^2)<8{r_k(\tau, \mu)}^2.&
\end{eqnarray}
When \(r_k(t, \mu)\) is a descending function, by a similar argument we have
\begin{eqnarray*}
&\mathcal{F}(r_k)(\omega, \mu)=r_k(0, \mu)\left(t_3\,\sinc(\omega t_3)+\mathbf{i}\,t_4\,\sin(\frac{\omega t_4}{2})\sinc(\frac{\omega t_4}{2})\right)&
\end{eqnarray*} and \(|\omega \mathcal{F}(r_k)(\omega, \mu)|< \sqrt{5}r_k^{\circ}\) for \(\omega \neq 0.\) This completes the proof.

Claim \ref{411Item2}. From \(\mathcal{F}(r_k)(\omega, \mu)=\int_{0}^{\tau}r_k(t, \mu)\exp(-\mathbf{i}\omega t)\mathrm{d}t,\) we have
\begin{eqnarray*}
|\mathcal{F}(r_k)(\omega, \mu)|\leq \int_{0}^{\tau}| r_k(t, \mu)\exp(-\mathbf{i}\omega t)|\mathrm{d}t=\int_{0}^{\tau}r_k(t, \mu)\mathrm{d}t=\mathcal{F}(r_k)(0, \mu).
\end{eqnarray*} Hence, \(\mathcal{F}(r_k)(0, \mu)\) is a maximum value for \(|\mathcal{F}(r_k)(\omega, \mu)|.\) By equation \eqref{Rkomega}, we have
\begin{eqnarray*}
|\mathcal{F}(r_k)(\omega, \mu)|^2= \int_{0}^{\tau}\int_{0}^{{\tau}}r_k(t, \mu)r_k(t^{\prime}, \mu)\cos\omega(t-t^{\prime})\mathrm{d}t^{\prime}\mathrm{d}t.
\end{eqnarray*} Therefore,
\begin{eqnarray*}%\textstyle
&|\mathcal{F}(r_k)(\omega, \mu)|^2-|\mathcal{F}(r_k)(0, \mu)|^2=\textstyle\int_{0}^{\tau}\int_{0}^{{\tau}}r_k(t, \mu)r_k(t^{\prime}, \mu)(\cos\omega(t-t^{\prime})-1)\mathrm{d}t^{\prime}\mathrm{d}t\qquad\qquad\qquad\qquad\qquad& \\
&\leq \textstyle {\min\{r_k^{\circ}, r_k(\tau, \mu)\}}^2\int_{0}^{\tau}\int_{0}^{{\tau}}(\cos\omega(t-t^{\prime})-1)\mathrm{d}t^{\prime}\mathrm{d}t \qquad\qquad\qquad\qquad\qquad\qquad&\\
&=\textstyle {\min\{r_k^{\circ}, r_k(\tau, \mu)\}}^2\left(\frac{2(1-\cos\omega\tau)}{\omega^2}-\tau^2\right)=\frac{4 {\min\{r_k^{\circ}, r_k(\tau, \mu)\}}^2}{\omega^2}\left(\sin^2\left(\frac{\omega\tau}{2}\right)-\left(\frac{\omega\tau}{2}\right)^2\right).&
\end{eqnarray*} Since \(\sin \alpha< \alpha\) for each nonzero \(\alpha\in \mathbb{R},\) the latest expression is always strictly negative for \(\tau\neq 0\). Therefore, the proof of the second claim is now completed.
\end{proof}

\begin{cor}\label{cor2} Let \(n:=6,\) \(\omega_k=k\omega_1\) for \(1\leq k\leq n,\) \((\x(t, \mu), \y(t, \mu))\) be a solution for the differential system \eqref{EulerianDiff}, where \((\x, \y)\in \mathcal{M}_{{\pmb{c}}}\) over the time interval \(0\leq t\leq \tau.\) Further, denote \(X_i(\omega, \mu)\) and \(Y_i(\omega, \mu)\) (\(1\leq i\leq n\)) for the Fourier transform of \(x_i(t, \mu)\) and \(y_i(t, \mu)\). Then, for \(X(\omega, \mu):= \sum^n_{i=1}X_{i}(\omega, \mu)\) and all \(k,\)
\bas
&\vert X(\omega, \mu)-\frac{1}{2} \mathcal{F}(r_k )(\omega-\omega_k, \mu)\vert < \frac{7}{127} \qquad \hbox{ when }\, \omega\in \left(\frac{(2k-1)\omega_1}{2}, \frac{(2k+1)\omega_1}{2}\right).&
\eas Furthermore, estimated peaks of the amplitude spectrum of the signal \(\sum_{i=1}^{n}x_i\) follow
\begin{eqnarray*}
&\left(\frac{\omega_i}{2\pi}, \frac{c_i}{2\,c_1}\mathcal{F}(r_1)(0, \mu)\right) \qquad\hbox{ for }\;  1\leq i\leq n.&
\end{eqnarray*}
\end{cor}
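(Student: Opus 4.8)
The plan is to reduce the Fourier transform of $\sum_{i=1}^{n}x_i$ to a finite superposition of frequency‑shifted copies of $\mathcal{F}(r_1)$, isolate inside each partial window the single resonant copy, and then control every remaining term by the decay estimate of Lemma \ref{lem4}. First I would insert the explicit form $x_i(t,\mu)=r_i(t,\mu)\cos(\omega_i t+\phi_i)$ from \eqref{x_i, y_i}, with $\phi_i=\arctan\frac{y_i^{\circ}}{x_i^{\circ}}$, and split the cosine into exponentials to get
\[
X_i(\omega,\mu)=\tfrac12 e^{\mathbf{i}\phi_i}\mathcal{F}(r_i)(\omega-\omega_i,\mu)+\tfrac12 e^{-\mathbf{i}\phi_i}\mathcal{F}(r_i)(\omega+\omega_i,\mu).
\]
Since $(\x,\y)\in\mathcal{M}_{\cb}$, claim \ref{Item3} of Lemma \ref{lem1} gives $r_i=\frac{c_i}{c_1}r_1$, so by linearity $\mathcal{F}(r_i)(\cdot,\mu)=\frac{c_i}{c_1}\mathcal{F}(r_1)(\cdot,\mu)$, and summing over $i$ writes $X(\omega,\mu)$ as a finite sum of the shifted functions $\mathcal{F}(r_1)(\omega\mp\omega_i,\mu)$. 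Taking the initial phases to vanish in the simulation (i.e.\ $y_i^{\circ}=0$, hence $\phi_i=0$, which is essential since otherwise a spurious unimodular factor multiplies the large value $\mathcal{F}(r_k)(0,\mu)$), the term of $X_k$ with argument $\omega-\omega_k$ is exactly $\tfrac12\mathcal{F}(r_k)(\omega-\omega_k,\mu)$, the asserted leading term.

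Next I would restrict to $\omega\in\big(\frac{(2k-1)\omega_1}{2},\frac{(2k+1)\omega_1}{2}\big)$, a window of width $\omega_1$ centred at $\omega_k$, and subtract this resonant term. The residual $X(\omega,\mu)-\tfrac12\mathcal{F}(r_k)(\omega-\omega_k,\mu)$ then collects the anti‑resonant part of $X_k$ together with every $X_i$ for $i\neq k$. For each of these the frequency argument is bounded away from zero: writing $\omega-\omega_i=(\omega-\omega_k)+(k-i)\omega_1$ with $\omega_i=i\omega_1$ and $|\omega-\omega_k|<\frac{\omega_1}{2}$, one obtains $|\omega-\omega_i|\geq(|k-i|-\tfrac12)\omega_1\geq\frac{\omega_1}{2}$ for $i\neq k$, and $\omega+\omega_i>\frac{3\omega_1}{2}$ for all admissible $k,i\geq 1$.

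Applying claim \ref{411Item1} of Lemma \ref{lem4} in the form $|\mathcal{F}(r_i)(\omega',\mu)|<\frac{2\sqrt2}{|\omega'|}\max\{r_i^{\circ},r_i(\tau,\mu)\}$ to each of the at most $2n-1$ residual terms, and invoking the amplitude normalization $r_i\leq 1$ together with the large fundamental frequency $\omega_1=2\pi f_1$ of the note, the aggregate residual is $<\frac{7}{127}$, the hearing threshold of Remark \ref{Rem4.12}. I expect this to be the main obstacle: it is precisely the step where one must combine the $1/|\omega'|$ tail decay, the finite number $n=6$ of partials, and the physical scale of $\omega_1$ (and of the normalized amplitudes) so that the summed off‑resonant error is pushed below the audibility threshold rather than merely shown to be finite.

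Finally, for the peak statement I would observe that inside window $k$ the magnitude spectrum $|X(\omega,\mu)|$ equals, up to this sub‑threshold residual, $\tfrac12|\mathcal{F}(r_k)(\omega-\omega_k,\mu)|$, and by claim \ref{411Item2} of Lemma \ref{lem4} the function $|\mathcal{F}(r_k)(\cdot,\mu)|$ attains its unique global maximum at argument $0$. Hence the estimated peak is located at $\omega=\omega_k$, i.e.\ at frequency $\frac{\omega_i}{2\pi}$, with height $\tfrac12\mathcal{F}(r_k)(0,\mu)=\frac{c_i}{2\,c_1}\mathcal{F}(r_1)(0,\mu)$ after using $\mathcal{F}(r_k)(0,\mu)=\frac{c_k}{c_1}\mathcal{F}(r_1)(0,\mu)$, which is exactly the claimed pair.
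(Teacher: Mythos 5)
Your proposal follows essentially the same route as the paper's proof: the modulation identity \(X_i(\omega,\mu)=\tfrac12\mathcal{F}(r_i)(\omega-\omega_i,\mu)+\tfrac12\mathcal{F}(r_i)(\omega+\omega_i,\mu)\), restriction to the window of width \(\omega_1\) about \(\omega_k\), the tail bound of claim \ref{411Item1} in Lemma \ref{lem4} applied to the \(2n-1\) off-resonant terms (the paper makes the numerical step explicit, bounding the resulting harmonic-type sum by \(6.8\) and using \(\omega_1\geq 2\pi\cdot 27.5\)), and claim \ref{411Item2} of Lemma \ref{lem4} for locating the peak. Your explicit handling of the initial phases \(\phi_i\) (the paper tacitly takes \(\theta_i^{\circ}=0\)) is a minor refinement rather than a different argument.
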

\begin{proof} From equations
\begin{eqnarray*}
&X_i\!\pm\!\mathbf{i}Y_i\!=\!\int_{0}^{\tau}\!r_i(\cos\omega_it\pm\mathbf{i}\sin\omega_it)\exp(-\mathbf{i}\omega t)\mathrm{d}t\!=\!
\int_{0}^{\tau}\!r_i\exp(-\mathbf{i}(\omega\!\mp\!\omega_i)t)\mathrm{d}t\!=\!\mathcal{F}(r_i)(\omega\!\mp\!\omega_i, \mu), &
\end{eqnarray*} we obtain the modulation-equation given by
\begin{eqnarray}\label{ModulationTheorem}
  &X_i(\omega, \mu)=\frac{1}{2}\mathcal{F}(r_i)(\omega-\omega_i, \mu)+\frac{1}{2}\mathcal{F}(r_i)(\omega+\omega_i, \mu).&
\end{eqnarray} The idea is to prove that the left side bound \(\mathcal{F}(r_i)(\omega+\omega_i, \mu)\) can be ignored and the main contributor into the sound hearing quality is the right side bound \(\mathcal{F}(r_i)(\omega-\omega_i, \mu).\) Due to equation \eqref{Rkomega}, we have \(|\mathcal{F}(r_i)(\omega, \mu)|=|\mathcal{F}(r_i)(-\omega, \mu)|,\) and thus, \(|X_i(\omega, \mu)|=|X_i(-\omega, \mu)|.\) Therefore, the function \(|X_i(\omega, \mu)|\) is symmetric with respect to the axis \(\omega=0\) and thus, we assume that \(\omega>0.\) Due to the strictly monotonic property of \(r_i(t, \mu)\) (by Corollary \ref{cor1}), we consider the increasing case for \(r_i(t, \mu)\). When \(r_i(t, \mu)\) is decreasing, the argument is similar. Hence, \(\max\{r_i^{\circ}, r_i(\tau, \mu)\}=r_i(\tau, \mu).\) We have
\begin{eqnarray*}
& X(\omega, \mu)=\int_{0}^{\tau}\sum_{i=1}^{n}x_i(t, \mu) \exp(-\mathbf{i}\omega)\mathrm{d}t=\sum_{i=1}^{n}\int_{0}^{\tau}x_i(t, \mu)\exp(-\mathbf{i}\omega)\mathrm{d}t=\sum_{i=1}^{n}X_i(\omega, \mu).&
\end{eqnarray*}
Let \(\omega_1\geq 27.5\)\,Hz. From equation \eqref{ModulationTheorem}, inequality \eqref{ineq1} and item \ref{411Item1} from Lemma \ref{lem2}, for the frequency interval \(\frac{(2k-1)\omega_1}{2}\leq\omega\leq \frac{(2k+1)\omega_1}{2}\) we have
\begin{eqnarray}\nonumber
&\vert X(\omega, \mu)-\frac{1}{2}\mathcal{F}(r_k)(\omega-\omega_k, \mu)\vert=\frac{1}{2}|\sum_{i=1}^{n}\mathcal{F}(r_i)(\omega+\omega_i, \mu)+\sum_{i=1,i\neq k}^{n}\mathcal{F}(r_i)(\omega-\omega_i, \mu)|&\\\nonumber
&\leq \frac{1}{2}\sum_{i=1}^{n}|\mathcal{F}(r_i)(\omega+\omega_i, \mu)|+\frac{1}{2}\sum_{i=1,i\neq k}^{n}|\mathcal{F}(r_i)(\omega-\omega_i, \mu)|&\\\nonumber
&\leq  \frac{\sqrt{2}\, r_k(\tau, \mu)}{\omega_1}\left(\sum_{i=1}^{n}\frac{1}{i+k-0.5}+\sum_{i=1}^{k-1}\frac{1}{i-0.5}+\sum_{i=1}^{n-k}\frac{1}{i-0.5}\right)&\\&
\leq \frac{\sqrt{2}\, r_k(\tau, \mu)}{\omega_1}\max_{1\leq k\leq n} \left\{\sum_{i=1}^{n}\frac{1}{i+k-0.5}+\sum_{i=1}^{k-1}\frac{1}{i-0.5}+\sum_{i=1}^{n-k}\frac{1}{i-0.5}\right\} <\frac{6.8\sqrt{2}\, r_k(\tau, \mu)}{55\pi}< \frac{7}{127}.&
\end{eqnarray}
The accuracy of these estimates further increases as the fundamental frequencies of notes amplifies. Thus, our approach works well for all note frequencies higher than that of note \(A0,\) that is, 27.5Hz. Thus, \(|X(\omega, \mu)-\frac{1}{2}\mathcal{F}(r_k)(\omega-\omega_k, \mu)|\) has a very small value and can be neglected. This gives rise to the accurate peak estimates for the amplitude spectrum at \((\frac{\omega_i}{2\pi}, \frac{1}{2}\mathcal{F}(r_i)(0, \mu)).\)
\end{proof}

%\begin{rem} We use the word \emph{good} in Corollary \ref{cor2} to describe the amplitude spectrum and accuracy of peaks in \(X(\omega, \mu).\)
%Because from the bounded value of \(n\) and proper chosen of \(\max\{r_k(0), r_k(\tau)\}\), the value \(2n\pi\,\frac{r_j(\tau)}{\omega_j}\) is smaller than the threshold value of the system and as a result is less than threshold of hearing.
%\end{rem}
%\begin{cor}
%  The number of harmonics in  musical sound is limited to ?.
%\end{cor}
%\begin{proof} We have
%  \begin{eqnarray*}
%  &\mylvert\int_{0}^{\tau}\mathcal{S}(t)\exp(-\mathbf{i}\omega t)\myrvert\leq \left|\int_{0}^{\tau}S(t)\cos\omega t\right|+\left| \int_{0}^{\tau}S(t)\sin\omega t\right|&
%  \end{eqnarray*}
%For the first summand
%\begin{eqnarray*}
%&\left| \int_{0}^{\tau}S(t)\cos\omega t\right| \leq  \left| \frac{S(\tau)\sin\omega \tau}{\omega}\right|+\left|\int_{0}^{\tau}\frac{\sin\omega t}{\omega}
%\mathrm{d S(t)}\right|\leq \frac{\max_{1\leq t\leq \tau}\{S(t)\}(1+2\tau)}{\omega}&
%\end{eqnarray*}
%\end{proof}
%\fbox{\texttt{}}

\begin{figure}[t]
\centering
\subfloat[Piano waveform of pitch $C\sharp 4$, dynamics mark {\it {p}}\label{Csharp4P}]{\includegraphics[width=3.6in]{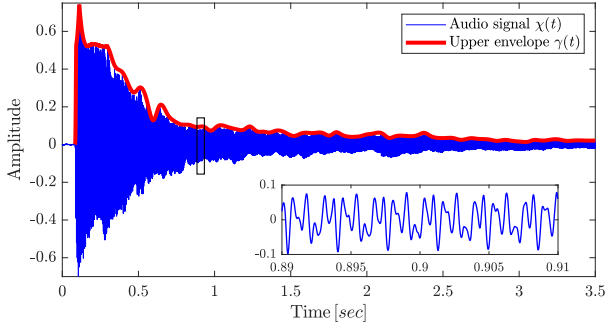}}
\subfloat[Violin waveform of pitch $C\sharp 4$, dynamics mark  {\it forte }\label{Csharp4V}]{\includegraphics[width=3.6in]{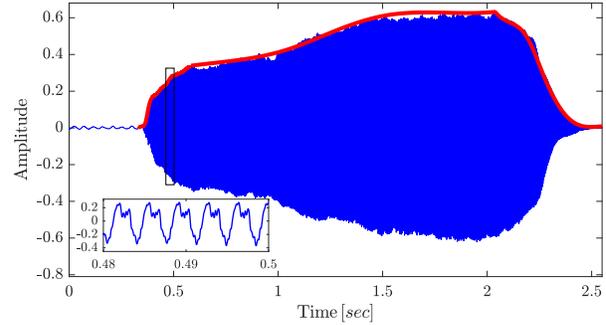}}\;
\caption{{\it Temporal envelops} for $C\sharp 4$ played by a piano and a violin}\label{Csharp4Plot}
\end{figure}

Let \(p_i=(\nu_i, d_i)\) denote for the \(i\)-th peak of the magnitude spectrum \(|\mathcal{F}(\chi(t))|\) where \(\nu_i\) and \(d_i\) are peak frequency values and peak amplitudes for \(1\leq i\leq n.\) Further,
\begin{eqnarray}\label{d0}
&\nu_0:=0,\quad d_0:=-\frac{1}{2}\left(\sum_{i=1}^{n}d_i+\min\left\{\sum_{i=1}^{n}d_i\cos(2\pi\nu_it)|\, 0\leq t\leq \nu_1^{-1}\right\}\right).&
\end{eqnarray}
The amplitude spectral vector \(\cb\) for each audio signal of a musical note is denoted by
\ba
&\cb:=(d_i)^n_{i=0}.&
\ea The relative sizes of the amplitudes \(d_i\)-s determine the timbre and these make a primary distinction between different musical sounds. The Eulerian structural symmetry essentially preserves the relative sizes of \(d_i\) from the initial conditions living on the leaf manifold. The signal \(\sum_{i=0}^{n}x_{i}\) represents the actual proposed simulated musical note, \ie this is what is actually heard, while the state variables of the differential equations are \((x_i, y_i)\) for \(i=0, 1, \dots, n\). The key argument in this decomposition and our Eulerian type modeling of a musical note lies in the {\it superposition} principle. In addition, human hearing is insensitive to the phase of harmonic partials; see \cite{DigitalSignalProcessing,TuningTimbreSpectrumScale}. Hence, the signals \(\sum_{i=1}^{n}(\delta_{i}x_{i}+(1-\delta_{i})y_{i})\) for different choices of \(\delta_{i}\in \{0, 1\}\) give rise to the same sound.

\subsection{Spectrum analysis for $C\sharp 4$ played by piano and violin}\label{SecSpectrm}

Consider the spectrum of the musical note $C\sharp 4$ played by piano and violin depicted in Figure \ref{D4-Csharp4Spectrum}. We use \texttt{fft()} function (fast fourier transform) in {\sc Matlab} to obtain these figures. Now consider the audio signal obtained from Piano given in Figure \ref{Csharp4P}. Following Remark \ref{Rem4.12}, we let \(n=6.\) The intensity of partials for \(i\geq 7\) is lower than the hearing threshold. Thus,
\begin{eqnarray*}
&d_1:=\frac{1069}{10000}, d_2:= \frac{923}{10000}, d_3:=\frac{604}{10000}, d_4:=\frac{411}{10000}, d_5:=\frac{559}{10000}, d_6:= \frac{412}{10000},&\\
&\nu_0=0, \nu_1:=274.4, \nu_2:= 548.9, \nu_3:= 823.3, \nu_4:= 1100, \nu_5:= 1376.6, \nu_6:= 1655.5&
\end{eqnarray*} and \(d_0=-0.1451.\) Hence, the harmonic frequency vector \(\pmb{\omega}_{\textrm{Piano}}^{C\sharp 4}\) and amplitude spectral vector \({\pmb{c}}_{\textrm{Piano}}^{C\sharp 4}\) corresponding with audio signal given in Figure \ref{Csharp4P} are computed as
\bes\pmb{\omega}_{\textrm{Piano}}^{C\sharp 4}:=2\pi (\nu_0, \nu_1, \ldots, \nu_{6}) \quad \hbox{ and } \quad {\pmb{c}}_{\textrm{Piano}}^{C\sharp 4}:=(d_0, d_1, \ldots, d_{6}).\ees
Using a similar procedure and argument for the case of Figure \ref{Csharp4V}, an audio signal of the pitch $C\sharp 4$ played with violin, we have
\begin{eqnarray*}
&&\pmb{\omega}_{\text{Violin}}^{C\sharp 4}:= 2\pi\left(0, 277.6, 555.2, 832.8,1110,1387.6, 1665.2\right),\\
&& {\pmb{c}}_{\text {Violin}}^{C\sharp 4}:={10^{-4}}\left(-1438, 3746, 1356, 421, 192, 119, 309 \right).
\end{eqnarray*}
When the amplitude spectral vector \({\pmb{c}}\) is specified, the flow-invariant manifold \(\mathcal{M}_{{\pmb{c}}}\) is determined.

%The process of combining simple sinusoids or partials into a complex tone is known as \emph{additive synthesis}; see \cite{TheMathematicalFoundationsOfMusic}.

\begin{figure}[t]
\centering
%\begin{subfigure}
\subfloat[Normalised magnitude for $C\sharp 4$ played by piano \label{FourierPiano} ]{\includegraphics[width=3.6in]{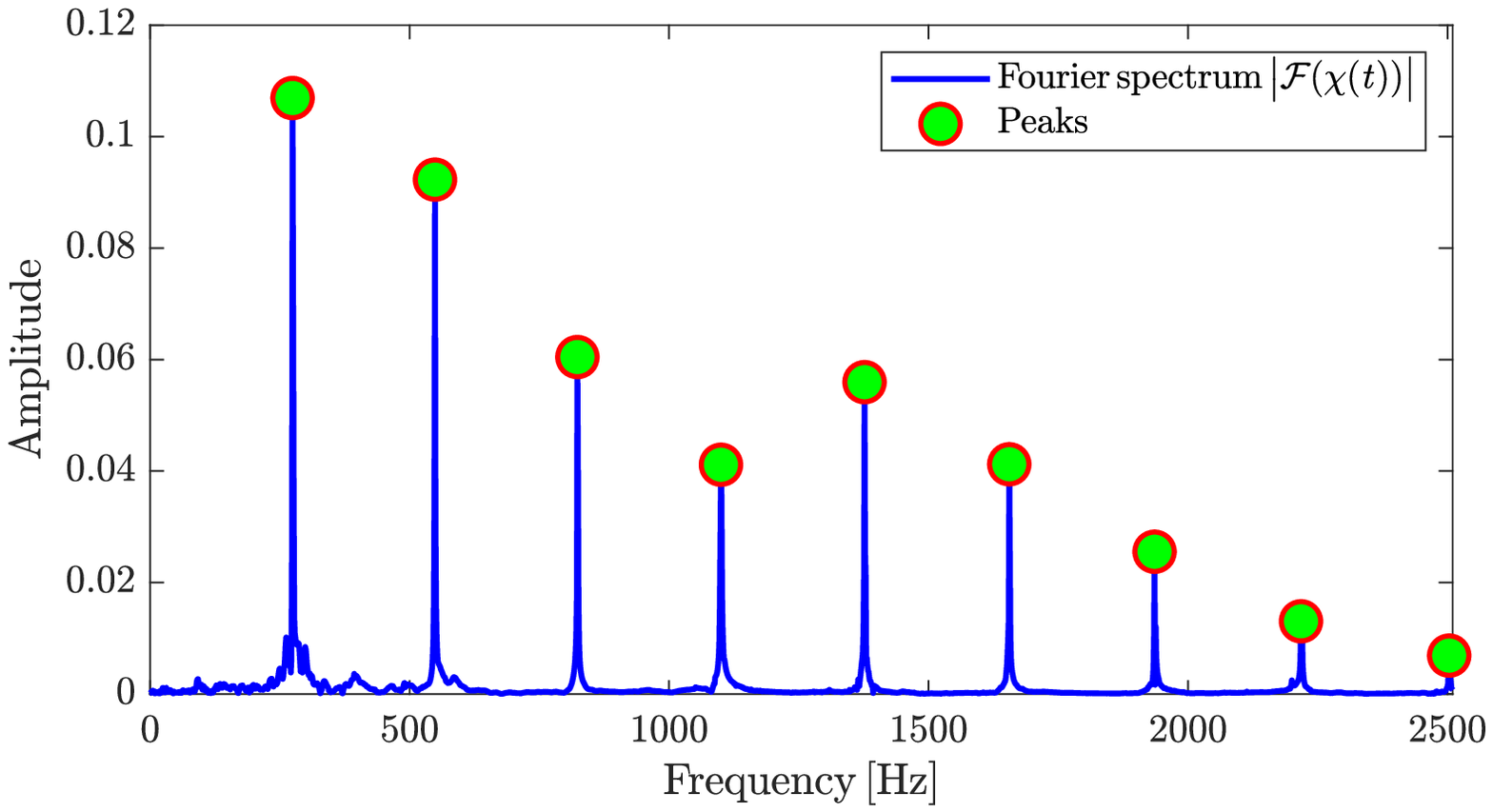}} %\label{G5P}
\subfloat[The case pitch $C\sharp 4$ played by a violin \label{Fourierviolin}]{\includegraphics[width=3.6in]{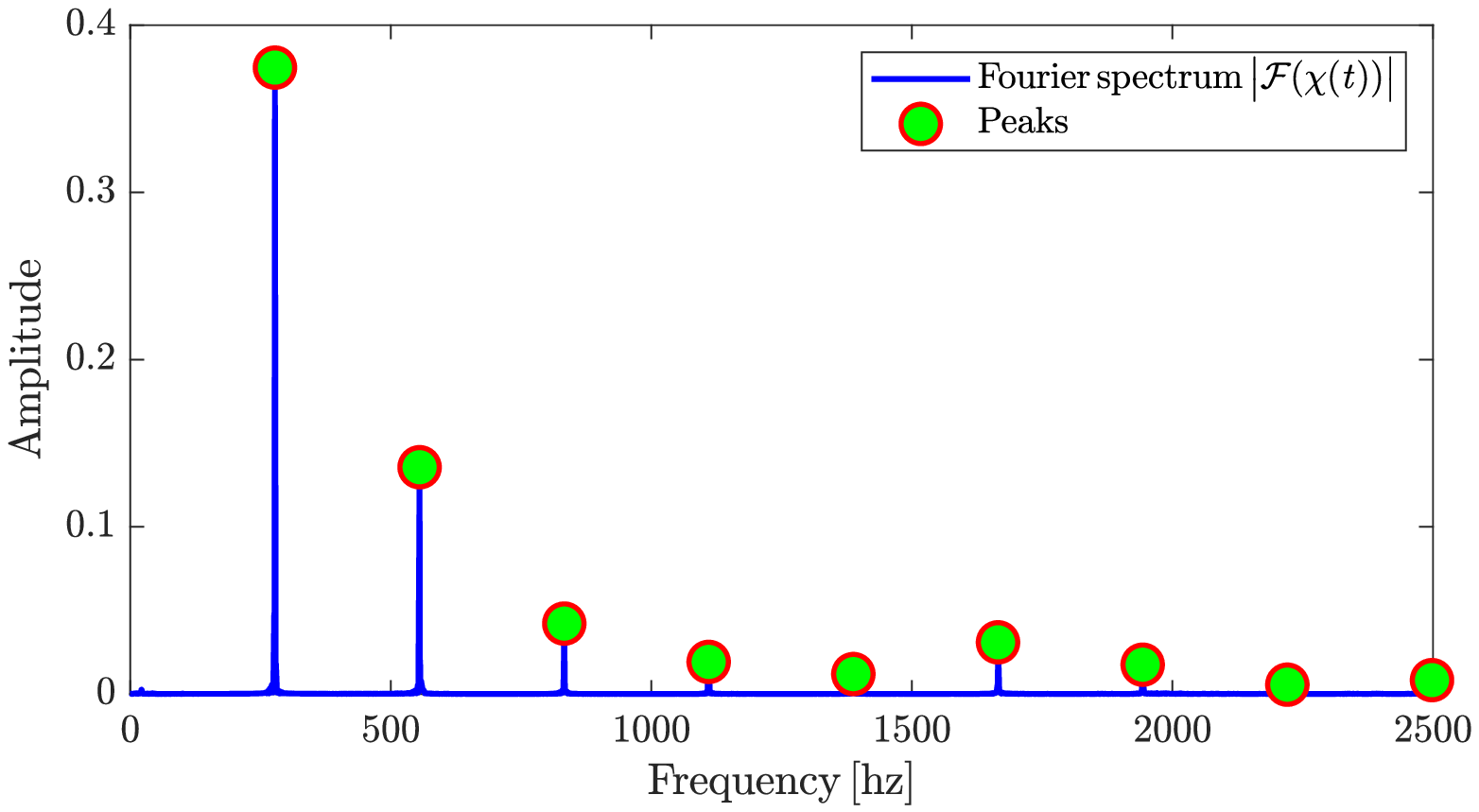}}\;
%\subfloat[Eulerian generated $A4$-waveform with {\it forte} dynamics]{\includegraphics[width=2.1in,height=1.7in]{A4fMatlab.eps}}\;
%\subfloat[Eulerian generated $A4$-waveform with {\it Crescendo} dynamics \label{Wavef}]{\includegraphics[width=2.3in,height=1.7in]{A4CreMatlab.eps}}
\caption{{\it Normalised magnitude spectrum} of the pitch $C\sharp 4$ played by piano and violin. }\label{D4-Csharp4Spectrum}
\end{figure}

\section{Dynamical system modelling of a musical sound}\label{SecModel}

The Eulerian structure preserves the spectral envelop simulated by its partial peaks. Thus, we only need to simulate the temporal envelop of a note and accommodate it into our Eulerian differential system modeling. Assume that \(\chi(t)\) is the signal of a musical sound with duration of \(t_f-t_0\). Denote \(\gamma(t)\) for the temporal envelope during the interval \([t_0, t_f]\). Individual segments of a temporal envelope such as dynamic states (attack, decay, release) and steady-states (delay, sustain and hold) are assumed to be piecewise monotonic functions. Hence, we split duration into segment subintervals denoted by \([t_j, t_{j+1}]\) for \(j=0, \ldots,m-1\) where \(t_{m}=t_f\) and \(m\) corresponds with the number of monotonic parts of the temporal envelope.

\begin{lem}\label{Lem5.1} Consider the differential equation \eqref{EulerianDiff} over the time interval \([t_j, t_{j+1}]\) along with the initial conditions \((x_i(t_j, \mu_j), y_i(t_j, \mu_j))=(\frac{d_i}{d_1}x_1(t_j, \mu_j), 0)\) and \(\omega_i:=2\pi\nu_i\) for \(0\leq i\leq n,\) where
\begin{eqnarray}\label{con1}
&f:=\alpha(\mu_j+g), \quad\hbox{ and }\quad g\in \mathbb{R}[{x_0}^2+{y_0}^2, \ldots, {x_n}^2+{y_n}^2].&
\end{eqnarray}
We assume that for a sufficiently small \(\epsilon>0\), the function \(g(\x, \mathbf{y})\) and the parameter \(\mu_j\) guarantee
\begin{eqnarray}\label{con2}
&\left\vert \sum_{i=0}^{n}\frac{d_i}{d_1} r_1(t, \mu_j)-\gamma(t) \right\vert<\epsilon, \quad \hbox{ for } \; t\in [t_j, t_{j+1}].&
\end{eqnarray} (Remark that the function \(g\) and \(\mu_j\) are appropriately introduced in equation \eqref{gForm} and Algorithm \ref{Algorthm} below.) Then, the relative sizes of partials to the fundamental amplitude associated with the signal \(\sum_{i=0}^{n}x_{i}\) are accurate estimations for those corresponding with \(\chi\). Furthermore, these two signals have effectively the same temporal envelop when \(\epsilon<\frac{7}{127}\).
\end{lem}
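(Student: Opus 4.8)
The plan is to treat the two assertions separately. Both rest on the fact that the prescribed initial data confine the trajectory to the leaf \(\mathcal{M}_{\cb}\): since \(y_i(t_j,\mu_j)=0\) and \(x_i(t_j,\mu_j)=\frac{d_i}{d_1}x_1(t_j,\mu_j)\), Claim \ref{Item3} of Lemma \ref{lem1} together with Claim \ref{monotonicity} of Lemma \ref{lem2} give \(r_i(t,\mu_j)=\frac{c_i}{c_1}r_1(t,\mu_j)\) throughout \([t_j,t_{j+1}]\), and the vanishing of every \(y_i(t_j,\mu_j)\) places all partials in phase at \(t_j\). Inserting this into \eqref{x_i, y_i} — the sign of the offset \(d_0\) being carried by the phase of the \(i=0\) partial — yields the factorization \(\sum_{i=0}^n x_i(t,\mu_j)=\frac{r_1(t,\mu_j)}{d_1}\,W(t)\) with the \(\nu_1^{-1}\)-periodic carrier \(W(t):=d_0+\sum_{i=1}^n d_i\cos(\omega_i t)\).

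For the first assertion I would invoke Corollary \ref{cor2} directly. With \(c_i=d_i\) for \(1\le i\le n\), its conclusion places the estimated peaks of the amplitude spectrum of \(\sum_{i=1}^n x_i\) at \(\big(\nu_i,\frac{d_i}{2d_1}\mathcal{F}(r_1)(0,\mu_j)\big)\), so the ratio of the \(i\)-th peak to the fundamental peak equals \(d_i/d_1\), which is exactly the measured peak ratio of \(\chi\) read from \eqref{Peaks}. The residual error is the quantity bounded by \(\frac{7}{127}\) in Corollary \ref{cor2}, certifying that these ratios are accurate estimates.

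For the temporal envelope I would exploit the factorization above. The crux is the choice of \(d_0\) in \eqref{d0}: since every cosine attains its maximum \(1\) simultaneously at \(t=0\), one has \(\max_\tau W(\tau)=W(0)=d_0+\sum_{i=1}^n d_i=\sum_{i=0}^n d_i\). Because \(r_1\) is monotonic (Corollary \ref{cor1}) and varies slowly relative to the carrier period, joining the local maxima of \(\frac{r_1}{d_1}W\) recovers the temporal envelope \(\frac{r_1(t,\mu_j)}{d_1}\max_\tau W(\tau)=\sum_{i=0}^n\frac{d_i}{d_1}r_1(t,\mu_j)\). Hypothesis \eqref{con2} bounds the deviation of this envelope from \(\gamma(t)\) by \(\epsilon\); when \(\epsilon<\frac{7}{127}\) the discrepancy lies below the hearing threshold of Remark \ref{Rem4.12}, so the two signals carry effectively the same temporal envelope.

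The step I expect to be the main obstacle is this envelope identification: strictly, the maximum of \(\frac{r_1(t)}{d_1}W(t)\) over one carrier period equals \(\frac{r_1(t)}{d_1}\max_\tau W(\tau)\) only up to the variation of \(r_1\) across that period. I would control this by using the monotonicity from Corollary \ref{cor1} to bound \(|r_1(t)-r_1(t')|\) for \(t,t'\) in a common period \([k\nu_1^{-1},(k+1)\nu_1^{-1}]\) and absorb it into the slack between \(\epsilon\) and \(\frac{7}{127}\); the accompanying verification that the prescribed \(d_0\) forces \(\max_\tau W(\tau)=\sum_{i=0}^n d_i\) is the elementary computation indicated above.
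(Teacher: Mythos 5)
Your proposal is correct and follows essentially the same route as the paper's proof: restrict to the leaf \(\mathcal{M}_{\cb}\) via the initial data, factor \(\sum_{i=0}^{n}x_i\) into \(\big(\sum_{j}\tfrac{d_j}{d_1}r_1\big)\) times the normalized carrier \(\sum_i d_i\cos(2\pi\nu_i t)/\sum_j d_j\), use the choice of \(d_0\) in \eqref{d0} to pin the carrier in \([-1,1]\) with maximum attained at \(t=l/\nu_1\), and conclude via Corollary \ref{cor2} and condition \eqref{con2}. The only difference is that you explicitly flag and propose to control the variation of \(r_1\) over a single carrier period in the envelope identification — a point the paper's proof passes over silently — so your version is, if anything, slightly more careful.
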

\begin{proof}
%Assume that \((x_{i}(t), y_{i}(t))\) for \(1\leq i\leq n\) are flows of \eqref{EulerianDiff}.
Let \(\theta_{i}^{\circ}=0\) for \(0\leq i\leq n.\) Due to the flow-invariant property of the manifold \(\mathcal{M}_{{\pmb{c}}},\)
we have
\begin{eqnarray*}
&\sum_{i=0}^{n}x_{i}=\sum_{i=0}^{n}r_{i}\cos(2\pi\nu_it)=\frac{(\sum_{j=0}^{n}d_j)(\sum_{i=0}^{n}\frac{d_ir_{1}}{d_1}\cos(2\pi\nu_it))}{\sum_{j=0}^{n}d_j}
=\frac{\left(\sum_{j=0}^{n}\frac{d_jr_{1}}{d_1}\right)(\sum_{i=0}^{n}d_i\cos(2\pi\nu_it)}{\sum_{j=0}^{n}d_j}.&
\end{eqnarray*} The function \(\sum_{i=0}^{n}d_i\cos(2\pi\nu_it)\) is an alternating signal with frequency \(\nu_1\) hertz. Since
\begin{eqnarray*}
&\sum_{i=0}^{n}d_i\cos(2\pi\nu_it)\geq \min\{\sum_{i=1}^{n}d_i\cos(2\pi\nu_it)|\, 0\leq t\leq \nu_1^{-1}\}+d_0=-\sum_{i=0}^{n}d_i,&
\\&-1\leq \sum_{i=0}^{n}\frac{d_i}{\sum_{j=0}^{n}d_j}\cos(2\pi\nu_it)\leq 1.&\eas
Here, the right equality holds when \(t=\frac{l}{\nu_1}\) for some \(l\in \mathds{Z}\). Hence, \(\sum_{j=0}^{n}\frac{d_j}{d_1}r_1\) is the temporal envelope of the signal \(\sum_{i=0}^{n}x_{i}\). The proof is completed by Corollary \ref{cor2} and equation \eqref{con2}.
\end{proof}

Now we extend Lemma \ref{Lem5.1} from a time segment to the whole time interval \([t_0, t_m].\)

\begin{cor}\label{cor3}
Consider the dynamical system
\begin{eqnarray}\label{SimDyn}
&\frac{\rm d}{{\rm d}t}(\x, \mathbf{y})=\Theta^{\pmb{\omega}}_{\0}+E_{\alpha(\mu(t)+g)}, \quad (x_i(0, \mu_0), y_i(0, \mu_0))=\frac{d_i}{d_1}(x_1(0, \mu_0), 0) \quad\text{for}\quad 0\leq i\leq n, &
\end{eqnarray} where \(\omega\) and \(g(\x, \mathbf{y})\) are given as of Lemma \ref{Lem5.1} and
\begin{eqnarray*}
&\mu(t):=\sum_{j=0}^{m}(\mu_j-\mu_{j-1})H(t-t_j). &
\end{eqnarray*} Here, \(H(t)\) is the Heaviside step function and \(\mu_{-1}=0\). Further, assume that the condition \eqref{con2} is satisfied for the entire interval \([t_0, t_{m}]\). Then, the dynamical system \eqref{SimDyn} provides an accurate estimation for the spectral and temporal envelops of \(\chi(t)\).
\end{cor}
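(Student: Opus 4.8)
The plan is to reduce the whole-interval statement to a segment-by-segment application of Lemma~\ref{Lem5.1}. The first observation is that $\mu(t)$ is piecewise constant: on each subinterval $[t_j,t_{j+1})$ the telescoping sum collapses, since $H(t-t_k)=1$ exactly for $k\le j$ and vanishes for $k>j$, giving
\bes
\mu(t)=\sum_{k=0}^{j}(\mu_k-\mu_{k-1})=\mu_j-\mu_{-1}=\mu_j.
\ees
Hence on $[t_j,t_{j+1})$ the system \eqref{SimDyn} is literally the system of Lemma~\ref{Lem5.1} with the fixed parameter value $\mu_j$, driven by the state reached at the left endpoint $t_j$. The corollary is then an induction over $j$ in which the terminal state of segment $j$ serves as the initial condition for segment $j+1$.

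The second step is to check that the trajectory never leaves the leaf $\mathcal{M}_{\cb}$ on which the initial condition at $t=0$ is placed. Although the vector field jumps at each switching time $t_j$, claim~\ref{welldef} of Lemma~\ref{lem1} guarantees that $\mathcal{M}_{\cb}$ is flow-invariant \emph{independently of the parameter $\mu$ and of $f$}; consequently the same leaf is invariant on every segment regardless of the value $\mu_j$. Since the state $(\x,\y)$ is continuous across each $t_j$ (only the right-hand side is discontinuous there), the concatenated curve is a well-defined, continuous, piecewise-smooth solution that stays on $\mathcal{M}_{\cb}$ for all $t\in[t_0,t_m]$. In particular $r_i(t,\mu)=\frac{d_i}{d_1}r_1(t,\mu)$ holds throughout, by claim~\ref{Item3} of Lemma~\ref{lem1}.

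With these two reductions in hand, I would apply Lemma~\ref{Lem5.1} on each $[t_j,t_{j+1}]$ to obtain, on that segment, both the accurate partial-to-fundamental ratios for $\sum_{i=0}^n x_i$ and the identification of $\sum_{j=0}^n\frac{d_j}{d_1}r_1$ as its temporal envelope. The per-segment envelope estimates glue into a single estimate on $[t_0,t_m]$ precisely because the hypothesis \eqref{con2} is assumed on the entire interval and because $r_1$ is continuous and monotone on each piece by claim~\ref{monotonicity} of Lemma~\ref{lem2}. The spectral (peak) accuracy follows from Corollary~\ref{cor2}, whose bounds are phrased in terms of $|\mathcal{F}(r_i)|$ and are therefore unaffected by which segment is analysed.

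The one genuinely delicate point, and the step I expect to be the main obstacle, is the mismatch between the phase normalization $\theta_i^{\circ}=0$ used in Lemma~\ref{Lem5.1} and the nonzero phases $\theta_i(t_j)=\omega_i t_j$ accumulated by the start of a later segment: the terminal state of segment $j$ does not have the form $\frac{d_i}{d_1}(x_1(t_j),0)$ required verbatim by Lemma~\ref{Lem5.1}. I would resolve this by noting that neither conclusion of Lemma~\ref{Lem5.1} actually depends on the phases---the temporal envelope is governed by the amplitude $r_1$ alone, since the angular components decouple in \eqref{ReducedEqu2}, and the peak estimates of Corollary~\ref{cor2} involve only the magnitudes $|\mathcal{F}(r_i)|$---so the segmentwise conclusions survive a harmless phase shift, consistent with the phase-insensitivity of human hearing invoked earlier. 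Carrying only the scalar amplitude $r_1(t_j)$, rather than the full normalized state, across each switching time is what makes the induction over segments go through.
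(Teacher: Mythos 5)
The paper states this corollary without an explicit proof, treating it as the immediate segment-by-segment extension of Lemma \ref{Lem5.1} announced just before it, and your argument supplies exactly that intended gluing: collapse of the telescoping Heaviside sum to $\mu_j$ on each $[t_j,t_{j+1})$, leaf invariance independent of $\mu$ and $f$ (claim \ref{welldef} of Lemma \ref{lem1}), continuity of the state across the switching times, and the hypothesis \eqref{con2} imposed on all of $[t_0,t_m]$. The one point you flag as delicate is in fact moot: the angular equations give $\theta_i(t)=\omega_i t$ continuously through every switching time, so $x_i=r_i\cos(\omega_i t)$ with $r_i=\frac{d_i}{d_1}r_1$ holds globally and the $d_0$-normalization of \eqref{d0} (and hence the envelope identification in Lemma \ref{Lem5.1}) carries over to the whole interval without any re-initialization of phases.
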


Corollary \ref{cor3} concludes that the qualitative type changes in the dynamical system \eqref{SimDyn} represents the qualitative behavior of the audio signal of the musical note. Hence, for modelling of an arbitrary audio signal of a musical note, say \(\chi(t)\), we identify vectors \(\pmb{\omega}\) and \({\pmb{c}}\) via the Fourier spectrum analysis of the audio signal, \ie \(|\mathcal{F}(\chi(t))|\). This is described in section \ref{Sec4}. In subsection \ref{SubsecModel}, we propose an algorithm for deriving the function \(g\) and parameters \((\mu_j)_{j=0}^m\) such that \(\sum_{i=0}^{n}\frac{d_i}{d_1} r_{1}\) appropriately estimates the envelope of \(\chi(t)\). We apply our proposed modelling procedure on the pitch \(C\sharp 4\) played by piano and violin. This is to illustrate the differences in qualitative changes (timbral behavior) of a musical note in different musical instruments. This is reflected in different bifurcation scenarios. Our proposed approach can be similarly applied on other musical notes and musical instruments. Figure \ref{Csharp4Plot} shows graphical representation of  note $C\sharp 4$ performed by piano and violin. These simulated timbral notes experience different sets of bifurcation types.

\subsection{Dynamics and temporal envelop }\label{SubsecModel}

Temporal envelopes are generally split into several time-segment types; namely, delay, attack, hold, decay, sustain and release. We determine these time segments by introducing the {\it border points} from the time interval. The time interval between any two consecutive border points is characterised by one of these six time-segment types. In order for sufficiently accurate simulation of temporal envelops, we refine these time-segments by further introducing {\it breaking points} within these time-segments.
%Each breaking point refers to a time that the slope of the temporal envelops becomes discontinuous.
In other words, each time-segment interval is divided into several subintervals by the {\it breaking points} to increase the accuracy of the estimation; see Algorithm \ref{Algorthm} and  Figure \ref{D4-Csharp4Plot}, where the yellow bullets show border points while the black bullets denote the breaking points.
Let
\ba\label{gForm}
&g(\x, \y):=a (\sum_{i=0}^{n}\frac{d_i}{d_1})^2\,({x_1}^2+{y_1}^2)+b(\sum_{i=0}^{n}\frac{d_i}{d_1})^4{({x_1}^2+{y_1}^2)}^2, &
\ea in equation \eqref{SimDyn}. Hence, the reduced amplitude equation on the manifold \(\mathcal{M}_{{\pmb{c}}}\) on subinterval \(t_j\leq t\leq t_{j+1}\) is given by
\begin{eqnarray}\label{eq4.5}
&\frac{\rm d}{{\rm d}t}\mathbf{r}=\frac{\alpha\,r_1}{c_1}{\pmb{c}}\left(\mu_j+a \left(\sum_{i=0}^{n}\frac{d_i}{d_1}\right)^2\,{r_1}^2+b\,\left(\sum_{i=0}^{n}\frac{d_i}{d_1}\right)^4{r_1}^4\right).&
\end{eqnarray}
Therefore, we introduce \(\rho:=\sum_{i=0}^{n}\frac{d_i}{d_1}r_1(t, \mu)\) and simulate the envelope of \(\chi(t)\) on each subinterval by tuning the parameter \(\mu_j\) via the following scalar equation
\begin{eqnarray}\label{rhoDot}
&\frac{\mathrm{d} \rho}{\mathrm{d}t}=\alpha \rho(\mu_j +a{\rho}^2+b {\rho}^4).&
\end{eqnarray}
Here, the coefficients \(a\) and \(\alpha\) are merely two constants for all time-segment intervals and subintervals while \(\mu_j\) stands for the bifurcation controller (tuning) parameter for the \(j\)-th subinterval. As the constant \(\alpha\) increases, the slope of the solution trajectory increases and vice versa. This gives rise to an effective controller design via the proportional correspondence between \(\alpha\) and the slope of the temporal envelop. A sign change in \(\alpha\) results in a bifurcation of the stability types, that is, all stabilities of invariant tori and the origin change. The bifurcation controller parameter \(\mu_j\) plays a central role for the appearance/disappearance of invariant hypertori and sizes of their corresponding radiuses. The convergence speed of trajectories converging to or diverging from an invariant hypertorus can be directly controlled by \(|\mu_j|\) and \(|\alpha|\); see Figure \ref{BifDiag}.
Further, %Our proposed modeling holds the following general conditions:
\begin{enumerate}
\item\label{1} (Continuous property of the solution trajectory) The initial value for the differential system \eqref{rhoDot} on each time subinterval must be taken as the same as the end value of solution trajectory corresponding with the previous subinterval.
\item The parameter \(\mu_j\) is tuned to match the slope of the right side tangent line (right derivative) on \(r_1(t, \mu)\) at the border and breaking points with those of the temporal envelop of the simulating note.
\item The subintervals are refined by additional breaking points to satisfy the condition \eqref{con2}.
\end{enumerate}
%{\color{red}
Our proposed required properties for a proper simulation using Eulerian differential systems \eqref{SimDyn}-\eqref{gForm} in the time-segments associated with the temporal envelope are listed  as follows:
\begin{enumerate}[label=(\alph*)]
\item\label{I_Attack} For the case of {\it attack}, the differential system must have an unstable equilibrium at the origin while the initial values of the solution
starts close to the origin and are non-zero. It should be small enough to be less than the hearing normalised velocity threshold. The amplitude growth speed of the solution trajectory can be controlled by \(\mu_j\). The differential system may have an asymptotically stable flow-invariant \(n\)-torus, where the attack trajectory approaches.
\item \label{I_delay}It suffices for the {\it delay time-subinterval case} that the origin would be an asymptotically stable equilibrium while the nonzero initial values is small enough so that the simulated sound would be translated into an inaudible sound.
\item \label{I_decayRelease} Simulations of time-subintervals for the {\it decay } and {\it release} either require an asymptotically stable \(n\)-torus with initial
values starting outside of the hypertorus or an asymptotically stable origin attracting the trajectories.
\item\label{I_sustain}
We consider the cases when we have at least a two-consecutive steady-state sustain segment; for example see the sustain segment in Figure \ref{Csharp4V}. These cases require two flow-invariant hypertori, where one of them is stable and the other is unstable. The unstable torus must be designed so that it is close to the initial values. Then, the trajectory diverges from the unstable hypertorus and converges to the stable hypertorus as the time runs in the sustain time-segment. When the envelope consists of only one steady-state sustain segment or hold segment, it suffices to take the initial values near or on an asymptotically stable hypertorus.
\end{enumerate}

\section{An algorithm for implementation} \label{Algorthm}

Our objective here is to introduce an algorithm for an accurate estimation of the temporal envelop by computing the breaking points, the constants \(a,b,\) \(\alpha\) and tuning parameters \(\mu_j.\)
%so that the solution \(\rho\) of \eqref{rhoDot} for \(t_{j}\leq t\leq t_{j+1}\) be an accurate estimation for the envelope of the musical wave form with an error less than the specified threshold.
\begin{enumerate}[label=\textnormal{(\roman*)}]
\item(Numerical computation of the upper envelope) \label{I_Coeff_1} First we get an approximation of upper envelope for each portion of the sound signal by the
command $\gamma$ = \texttt{envelope(x, np,'peak')} in {\sc Matlab}. This command in {\sc Matlab} uses the spline interpolation over local maxima, where they are  separated by at least $\texttt{np}$ samples. The curve $\gamma(t)$ is called by the upper envelope.
\item\label{I_coeffA}(Computing the constants \(a, b \) and \(\alpha\)) We consider two cases. The first case refers to when the \textit{sustain} segment of the upper envelope have at least a two-consecutive steady-state sustain segment. Otherwise, we consider \(\alpha:=1, a:=-1\) and \(b:=0.\) For the first case, let \(a:=1\) (\(a:=-1\)) for the increasing (the decreasing) sustain level. Also assume that the sustain segment be the ${j_s}$-th time-segment of the envelope. Then, for \(t=t_{{j_s}}\) and \(t=t_{{j_s}+1},\) we take \(\frac{\textrm{d}}{\textrm{d} t}\rho(t, \mu_{j_s})\approx 0.\) Therefore,
\begin{eqnarray}\label{calValue b}
   \mu_{j_s}+a(\gamma(t_{{j_s}+1})-\epsilon)^2+b(\gamma(t_{{j_s}+1})-\epsilon)^4=0,\quad \mu_{j_s}+a{\gamma(t_{{j_s}})}^2+b{\gamma(t_{{j_s}})}^4=0.
\end{eqnarray} This gives rise to the required values for \(b\) and \(\mu_{j_s}.\) The parameter \(\alpha\) is derived so that the slopes of the curves \(\gamma\) and \(\rho\) would be the same at the middle of the curve \(\gamma\) over the interval \((t_{j_s}, t_{{j_s}+1})\). Yet, a \(\epsilon\) (\(0<\epsilon<0.05\)) is subtracted from \(\gamma(t_{{j_s}})\) to locate the associated root \(\gamma(t_{{j_s}})\) slightly lower than \(\rho(t_{j_s}^-, \mu_{j_s}).\) Hence, \(\alpha\) is calculated from equation \eqref{rhoDot} and
\begin{eqnarray}\label{calValue alpha}
&\frac{\rm d}{{\rm d} t}\rho(\gamma^{-1}((\gamma(t_{{j_s}})-\epsilon+\gamma(t_{{j_s}+1}))/2), \mu_{j_s})= \dot{\gamma}(\gamma^{-1}((\gamma(t_{{j_s}})-\epsilon+\gamma(t_{{j_s}+1}))/2)).&
\end{eqnarray} The constants \(a\) and \(\alpha\) are fixed for the entire coloring of a given note while the static controller parameter \(\mu_j\) is varied as \(j\) changes. Thus, the constants \(a\) and \(\alpha\) are known for \(j\neq j_s\).
\item\label{I_rho0} For delay interval \((t_0, t_1),\) take \(\mu_0\leq-|\frac{1}{4b}|\) and \(\rho(0, \mu_0)\leq\min\{0.01, |2b|^{-\frac{1}{2}}\}.\) For other
segments, we follow items \ref{I_Coeff_2} and \ref{I_Coeff_3}, see below.
\item \label{I_Coeff_2}
When there is no breaking points on the interval \([t_{j-1}, t_j]\), take the initial values \(\rho(t_{j}, \mu_j)=\rho(t_{j}^-, \mu_{j-1})\) and \(\dot{\rho}(t_{j}^+, \mu_j)=\dot{\gamma}(t_{j}^+).\) Here, \(\dot{\gamma}(t_{j}^\pm)\) and \(\gamma(t_{j}^\pm)\) denote the classical right (left)-hand derivative and limits of \(\gamma\) from right (left) at \(t=t_{j}.\) Therefore,
\begin{eqnarray}\label{calValue Mu}
  &\dot{\rho}(t_{j}^+, \mu_j)=\alpha \rho(t_{j}^-, \mu_{j-1})(\mu_j+a{\rho(t_{j}^-, \mu_{j-1})}^2+b{\rho(t_{j}^-, \mu_{j-1})}^4)=\dot{\gamma}(t_{j}^+).&
\end{eqnarray} From equation \eqref{calValue Mu}, the constant parameter \(\mu_j\) is calculated.
  \item(The breaking point computations) \label{I_Coeff_3} As long as the error between \(\rho\) and \(\gamma(t)\) is larger than our specified
threshold, we add the mid-points of the intervals into the breaking points. More precisely, the extra breaking points belong to the interval \([t_j, t_{j+1}]\) and are denoted by \(t_j^l\) where
\begin{eqnarray*}
t_j^l:=\gamma^{-1}((\gamma(t^{l-1}_j)+\gamma(t_{j+1}))/2^m),
\end{eqnarray*} and \(m\in \mathbb{Z}^{\geq 0}\) is the minimum number that satisfies
\begin{eqnarray}\label{con3}
|\rho-\gamma(t)|< 0.05 \quad \text{ for all }\quad  t_{j}^{l-1}\leq t\leq t_{j+1}^{l}.
\end{eqnarray} When there is no breaking point within an interval \([t_j, t_{j+1}]\) or \([t_j, t_{j}^1],\) the calculated value for \(\mu\) is simply denoted by \(\mu_j.\) However, the allotted static constant \(\mu\) corresponding with the time interval \([t^l_j, t^{l+1}_j]\) or \([t^l_j, t_{j+1}]\) is denoted by \(\mu^l_j.\) The parameter \(\mu^l_j\) is similarly calculated via equation \eqref{calValue Mu} by evaluating it at \(t=t_j^l\). A smooth assumption for the upper envelop signal \(\gamma(t)\) guarantees that a finite set of breaking points is sufficient for a {\sc Matlab} program to estimate \(\gamma(t)\) with \(\rho\) with an error less than the specified threshold \eqref{con3}. We test the condition \eqref{con3}, using the following {\sc Matlab} command
\begin{eqnarray}\label{Eq4}
&\underset{t_j^l\leq t\leq t_j^{l+1}}{\max}\left\vert\rho(t, \mu_j^l)-\gamma(t)\right\vert=\texttt{max(abs(($\rho$(Fs$\cdot$($t_1$:$t_2$))-$\gamma$(Fs$\cdot$($t_j^l$:$t_j^{l+1}$)))))}. &
\end{eqnarray} When the right-hand value of \eqref{Eq4} is less than \(0.05,\) the condition \eqref{con3} is satisfied.

\end{enumerate}
\subsection{Analysis of the algorithm}

Consider equation \eqref{rhoDot} and the case \ref{I_sustain} where sustain segment is increasing and consists of two successive steady-states.
Hence, \(a=1\) and equation \eqref{calValue b} is solved for the parameter \(\mu\) and constant \(b.\) Then, equilibria are   \(\smash{\big(\frac{1\pm\sqrt{1-4b\mu}}{-2b}\big)\!^{\frac{1}{2}}}\)  and \(-\smash{\big(\frac{1\pm\sqrt{1-4b\mu}}{-2b}\big)\!^{\frac{1}{2}}}.\) This equation has two positive roots when  \(b<0\) and \(0<b\mu<\frac{1}{4},\) \ie \(\mu<0.\) Since \(\mu\) is negative, the smaller equilibrium is unstable and the larger one is asymptotically stable. Therefore, a trajectory \(\rho(t, \mu)\) approaches to the larger root when it starts from somewhere between the two roots. When the sustain segment involves two decreasing consecutive steady-states, \(a=-1.\) Then, equation \eqref{calValue b} results in positive values for \(\mu\) and \(b.\) Hence, the smaller root is asymptotically stable and the larger one is unstable. The smaller root is chosen so that initial value is placed between the two roots (but near to the larger root). Thereby, the conditions of item \ref{I_sustain} is satisfied. The parameter \(\epsilon\) is accommodated in equation \eqref{calValue b} for this purpose.

When \(b<0,\) \(a=1\), and \(\mu_0<\mu^{\star}=-|\frac{1}{4b}|\), the origin is asymptotically stable and its basin of attraction is the whole state space; see Figure \ref{SubP2SN} and Theorem \ref{lem3}. These correspond with the delay time segment.
For the case \(b>0,\) \(a=-1\), and \(\mu_0<0,\) the scalar  differential equation \eqref{rhoDot} has one positive equilibrium \(\smash{\big(\frac{1+\sqrt{1-4b\mu_0}}{2b}\big)\!^{\frac{1}{2}}}\); see Figure \ref{SupP2SN}. This is larger than the initial value \(\rho(0, \mu_0)\leq\min\{0.01, |\frac{1}{2b}|^{\smash{\frac{1}{2}}}\}.\) Hence, our expectation in item \ref{I_delay} holds.

In the attack interval, we must have \(\dot{\rho}(t, \mu_1)>0.\) For \(a=1\) and \(b<0,\) the condition \(\dot{\rho}(t, \mu_1)>0\) holds when (1) \(\mu_1<0\) and \(\rho(t_1^+, \mu_1)\) is between the two equilibria or (2) \(\mu_1>0\) and \(\rho(t_1^+, \mu_1)\) is between the origin and a positive equilibrium; see Figure \ref{SubP2SN}. Since \(\rho(t_1^+, \mu_1)<\rho(0, \mu_0)\leq 0.01,\) \(\rho(t_1^+, \mu_1)\) is less than the threshold and sufficiently close to the origin. For \(a=-1\) and \(b>0,\) the condition \(\dot{\rho}(t, \mu_1)>0\) is valid when (1) \(\mu_1>\frac{1}{4b}\) or (2) \(0<\mu_1<\frac{1}{4b}\) and  \(\rho(t_1^+, \mu_1)\) is between the origin and the stable equilibrium; see Figure \ref{SupP2SN}. Since \(\rho(t_1^+, \mu_1)< |\frac{1}{2b}|^{\smash{\frac{1}{2}}}\), we have \(\mu_1>0.\) The condition in item \ref{I_Attack} is fulfilled because \(\rho(t_1^+, \mu_1)\) is less than the threshold.

For the decay and release segments, we must have \(\dot{\rho}(t, \mu)<0.\) This condition for \(a=1\) and \(b<0\) holds if
(1) \(\mu<\frac{-1}{4b},\) (2) \(\frac{-1}{4b}<\mu\) and the initial value is more than the largest (stable) equilibrium, or (3) \(\frac{-1}{4b}<\mu\) and the initial value is between the origin and the unstable equilibrium. For \(a=-1\) and \(b>0,\) the condition \(\dot{\rho}(t)<0\) is satisfied when \(\frac{1}{4b}>\mu\) and the initial value either is between the two equilibria or is between the origin and unstable equilibrium. Hence, the conditions of \ref{I_decayRelease} are satisfied.
For simulation of sustain and hold segments, parameter \(\mu\) is such that sustain or hold-levels are equilibria of \eqref{rhoDot}.
%The parameter \(\mu\) is slightly adjusted so that the initial value starts close to the equilibria. When sustain or hold segments are started at time \(t=t_{j_s},\) it suffices to take \(\mu_{j_s}(\rho):=- a\rho^2-b{\rho}^4\) where \(\rho(t_s^+, \mu_{j_s})=\gamma(t^+_{j_s})-\epsilon.\)\(\mu_{j_s}:=- a(\gamma(t^+_{j_s})-\epsilon)^2-b{(\gamma(t^+_{j_s})-\epsilon)}^4\) where \(\rho(t_s^+, \mu_{j_s})=\gamma(t^+_{j_s})-\epsilon.\) The conditions of \ref{I_sustain} are then satisfied.
By similar reasonings, conditions \ref{I_Attack}, \ref{I_delay}, \ref{I_decayRelease},  and \ref{I_sustain} are proved to hold for the case \(a:=-1\) and \(b:=0.\)

 %Decay and sustain subintervals are formulated with the same differential system when there is one steady state level in sustain segment. Hence \(\delta=-1\) and \(a=0.\) It suffices to take \(\mu\)

%Inn release interval, since the trajectory tends to origin

\section{Differential systems for pitch $C\sharp 4$}\label{SecPianoCons}

We consider an audio file of the pitch $C\sharp 4$ played with piano. The waveform of this audio signal is plotted in Figure \ref{Csharp4P} using {\sc Matlab} command ``\texttt{audioread}''; see Remark \ref{Rem4.12}. Here, the piano key $C\sharp 4$ is pressed at $0.083 \sec$ and is released at $0.293 \sec.$ Temporal envelope of this musical tone consists of delay time, attack time, decay time, sustain level and release time. These correspond with the intervals \([0, 0.084]\), \([0.084, 0.112]\),\( [0.112, 0.160],\) \([0.160, 0.293]\) and \([0.293, 3.5],\) respectively. Thus, the time interval is divided by the border points \(t_0=0, t_1=0.084, t_2=0.112, t_3=0.16, t_4=0.293, t_5=3.5.\) Next, we apply Algorithm \ref{Algorthm} on these time intervals. This process adds some extra breaking points as described by item \ref{I_Coeff_3}. Thereby,  the time interval \([t_0, t_5]\) is divided by
\be\label{ti1} t_0, t_1, t_1^1:=0.066, t_1^2:=0.089, t_1^3:=0.0991, t_2, t_3, t_4, t_4^1:=0.467, t_4^2:=0.6785, t_5.\ee
Now we explain the details. Since the amplitude envelope function \(\gamma(t)\) only contains one steady-state segment during the sustain level, we take \(a:=0\) in equation \eqref{rhoDot}; see item \ref{I_coeffA} of the algorithm. In the delay (silence) interval for all \(t\in (t_0, t_1)\), \(\gamma(t)=0.\) For this case, we take \(\rho(0, \mu_0):= 0.001<0.01,\) \(\alpha_0:= -\mu_0:=1>0\) so that the condition \eqref{con3} is satisfied.
By solving equation \eqref{rhoDot}, we have \(\rho(t_1, \mu_1)=8.75\times 10^{-4}.\) In the case of attack interval,
the time interval corresponding with \(\gamma^{-1}((\gamma(t_1)+\gamma(0.0991))/2^i\) for \(i=0,1,2\) are not sufficient to satisfy the condition \eqref{con3}. Thus, we accommodate the first extra breaking point as \(t_1^1=\gamma^{-1}((\gamma(t_1)+\gamma(0.0991))/8)=\gamma^{-1}(0.066)=0.0865\) where \(\gamma(t_1)=0\) and \(\gamma(0.0991)=0.528.\) Therefore, (by item \ref{I_Coeff_2})
\begin{eqnarray*}
&\scalebox{0.97}{$\frac{{\rm d} \rho}{{\rm d}t}({t_1}^+, \mu_1)= \rho(t_{1}, \mu_1)(\mu_1-{\rho(t_{1}, \mu_1)}^2)=\dot{\gamma}({t_{1}}^+)=1.66$} \quad \hbox{ and } \quad \mu_1:=1902.&
\end{eqnarray*} Then, the condition \eqref{con3}  for \(t_1\leq t\leq t_1^1\) is satisfied due to the inequality
\begin{eqnarray*}
&{\max}_{t_{1}< t \leq t_1^1}\big|\rho(t, \mu_1)-\gamma(t)\big|=0.03<0.05.&
\end{eqnarray*}
Now we include a breaking point at \(t_1^3:=0.0991\) while the second extra breaking point is given by \(t_1^2:=\gamma^{-1}((\gamma(t_1^1)+\gamma(t_1^3))/2)=\gamma^{-1}(0.297)=0.089.\) There is no breaking point within \((t_1^2, t_1^3)\) and \((t_1^3, t_2)\). Similarly, the remaining attack segment parameters are derived as \(\mu_1^1:=498,\) \(\mu_1^2:=220,\) and \(\mu_1^3:=14.\) In the case of decay time, we have \(\rho(t_2, \mu_2)=0.720,\) \(\dot{\gamma}({t_{2}}^+)=-7.2,\) and $\frac{{\rm d} \rho}{{\rm d}t}({t_2}^+, \mu_2)= \rho(t_{2}, \mu_2)(\mu_2+{\rho(t_{2}, \mu_2)}^2)=-7.2.$ Hence, \(\mu_2=-9.5.\) For the sustain level, \ie \(0.16\leq t\leq 0.301,\) \(\mu_3:=0.28.\) This is due to \(\rho(t_3, \mu_3)=0.519,\) \(\dot{\gamma}({t_{3}})=-0.005,\) and \(\frac{{\rm d} \rho}{{\rm d}t}({t_3}^+, \mu_3)= \rho(t_{3}, \mu_3)(\mu_3-{\rho(t_{3}, \mu_3)}^2)=-0.005\).
%\begin{eqnarray*}
%&\scalebox{0.97}{$\frac{{\rm d} \rho}{{\rm d}t}({t_2}^+)=\alpha_2 \rho(t_{2})(\mu_2+{\rho(t_{2})}^2)=-15.4,\quad \frac{{\rm d} \rho}{{\rm d}t}({t_3}^{-})=\alpha_2 \gamma(t_3)(\mu_2+{\gamma(t_3)}^2)=0.29$}.&
%\end{eqnarray*}
For the release interval, we have the breaking point \(t_4^1:=\gamma^{-1}((\gamma(t_4)+\gamma(t_5))/2) =\gamma^{-1}(0.2692)=0.467.\) Now from item  \ref{I_Coeff_2},
\(\rho(t_4, \mu_4)=0.5166 \) and
\begin{eqnarray*}
&\frac{{\rm d} \rho}{{\rm d}t}({t_4}^+, \mu_4)= \rho(t_{4}, \mu_4)(\mu_4-{\rho(t_{4}, \mu_4)}^2)=\dot{\gamma}({t_4}^+)=-2.4.&
\end{eqnarray*}
 Thereby, \(\mu_4:=-4.5.\) Let \(t_4^2:=\gamma^{-1}((\gamma(t_4^1)+\gamma(t_5))/2) =\gamma^{-1}(0.14)=0.6785.\) From
\begin{eqnarray*}
&\frac{{\rm d} \rho({t_4^1}^+, \mu_4^1)}{{\rm d}t}= \rho(t_{4}^1, \mu_4^1)(\mu_4^1-{\rho(t_{4}^1, \mu^1_4)}^2)=\frac{\textrm{d} \gamma(t_4^1)}{\textrm{d} t}=-0.78 \quad \hbox{ and } \quad \rho(t_4^1, \mu_4^1)=0.2418,&
\end{eqnarray*} we have \(\mu_4^1:=-3.2.\) Due to \(\rho({t_4^2}, \mu_4^2)=0.2418\) and \(\frac{{\rm d} \gamma(t_4^2)}{{\rm d}t}=-0.09,\) we have \(\mu_4^2=-0.8\) for the subinterval \((t_4^2, t_5).\) This completes the determination of all parameters and constants of the Eulerian differential system \eqref{SimDyn} (\(g\) is given by equation \eqref{gForm}) associated with the modeling of an audio signal $C\sharp4$ played by piano. By the obtained values in subsection \ref{SecSpectrm}, we have \(\sum_{i=0}^{6}d_i/ d_1:= 2.364,\)
\begin{eqnarray*}
&n=6,\quad\pmb{\omega}=\pmb{\omega}_{\rm piano}^{C\sharp4},\quad {\pmb{c}}={\pmb{c}}_{\rm piano}^{C\sharp4},\quad a=0, \quad \alpha=1, \quad g(\x, \mathbf{y})= -5.588({x_{1}}^2+{y_{1}}^2).&
\end{eqnarray*} The time segments and breaking points are given in the list \eqref{ti1}. The initial values are given by \(\x(0, \mu_0)=10^{-4}(-5.74, 4.23, 3.65, 2.39, 1.62, 2.21, 1.63)\) and \(\mathbf{y}(0, \mu_0)=\0\) while
\begin{eqnarray*}
  \mu(t)\!&\!:=\!&\!-H(t)+1903H(t-t_1)-1404H(t-t_1^1)-278H(t-t_1^2)-206H(t-t_1^3)\\
  &&-23.5H(t-t_2)+9.78H(t-t_3)-4.78H(t-t_4)+1.3H(t-t_4^1)-2.4H(t-t_4^2).
\end{eqnarray*} For the simulated signal \(\sum_{i=0}^{6} x_i\) of the piano waveform, the first six ratios of the peaks of amplitude partials to the fundamental frequency amplitude is \((1, 0.8619,0.566, 0.376, 0.514, 0.375).\) These are very close to \((1 , 0.8634 , 0.565, 0.3845, 0.522, 0.38)\) associated with the actual piano audio signal.

\begin{figure}[t]
\centering
%\begin{subfigure}
\subfloat[Pitch $C\sharp4$ waveform played by  piano \label{CsharpP_Sim}]{\includegraphics[width=3.6in]{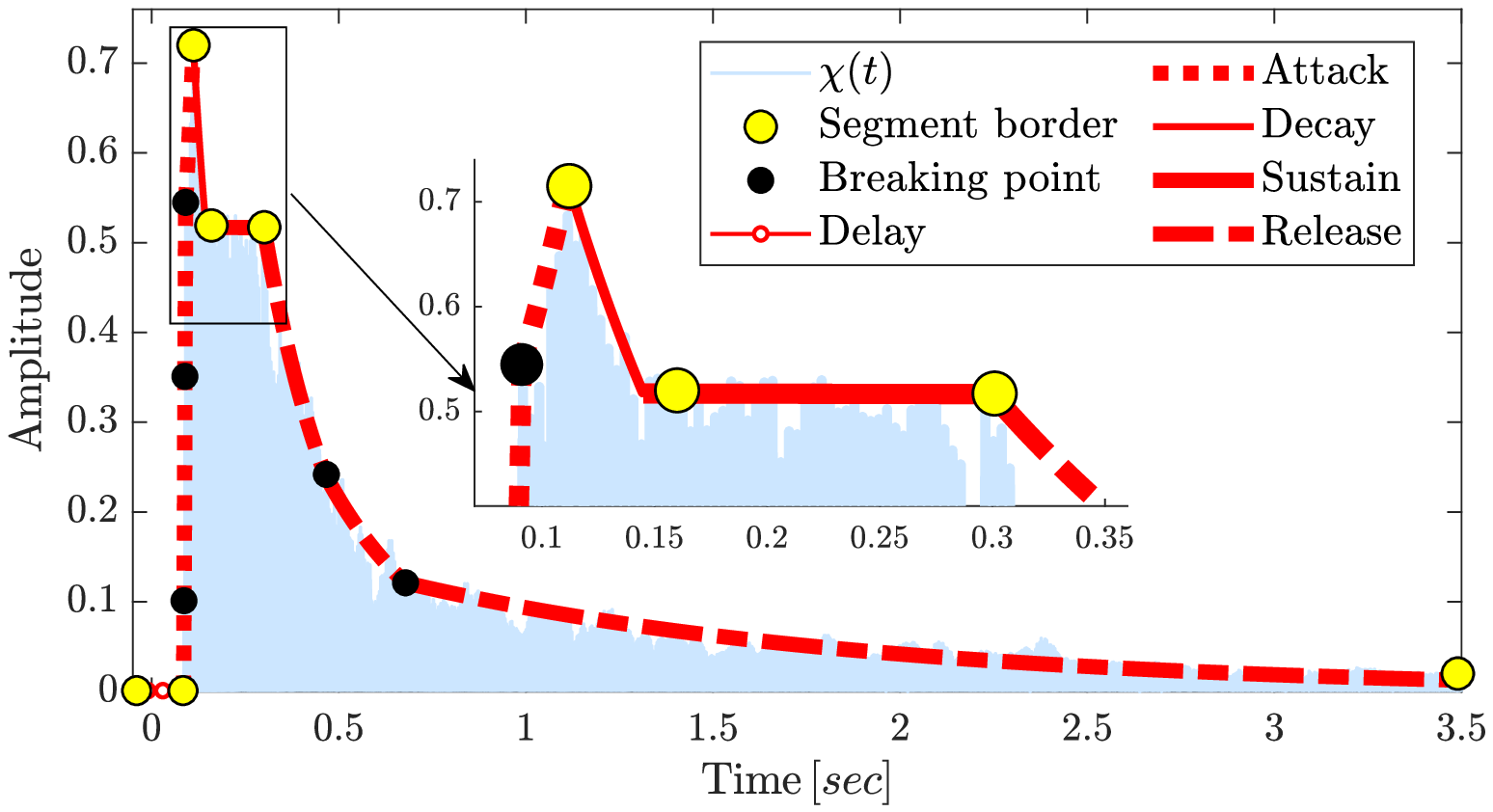}}%\label{G5P}
\subfloat[Pitch $C\sharp4$ waveform played by violin \label{CsharpV_Sim}]{\includegraphics[width=3.6in]{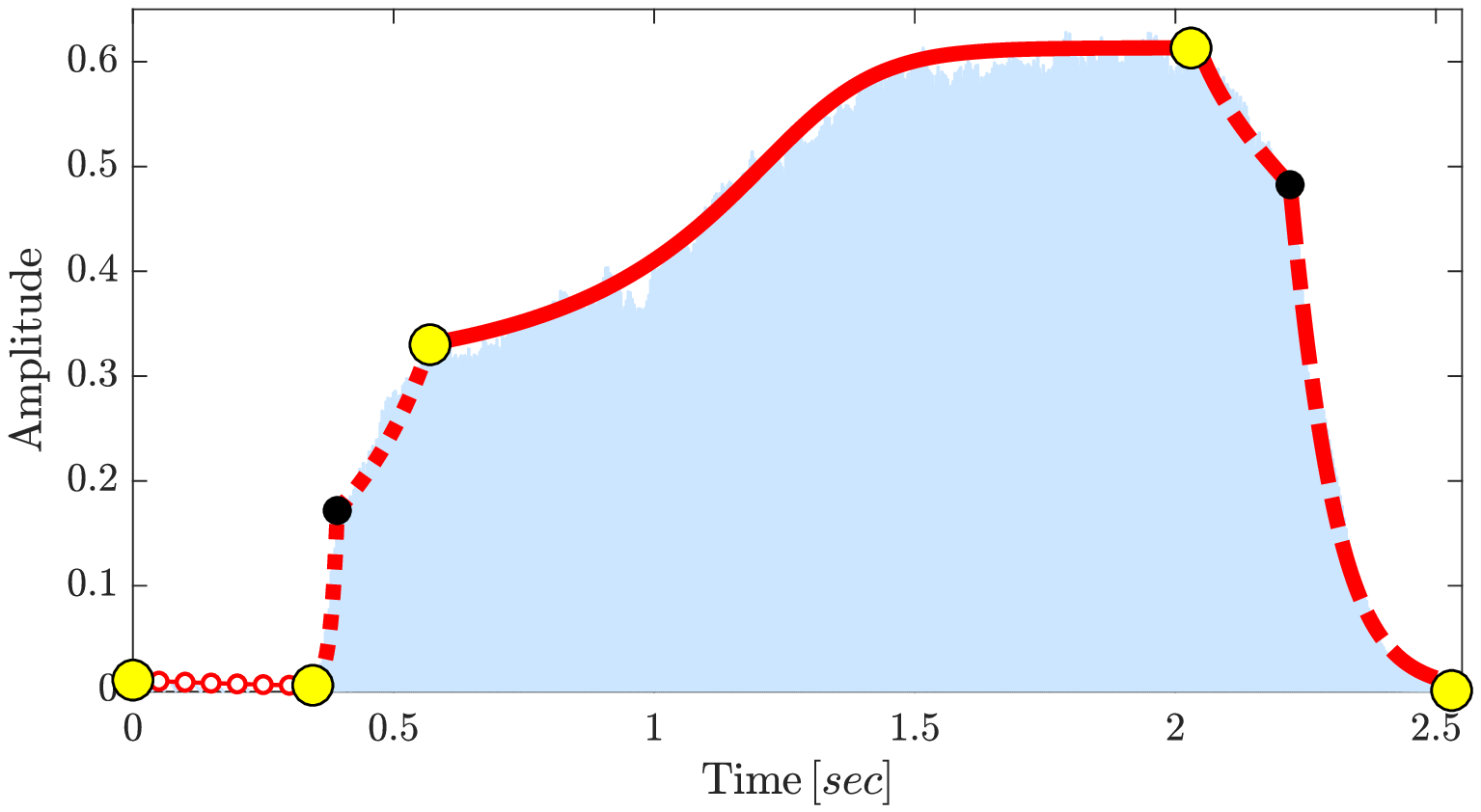}}
\caption{Envelope estimation for the audio file obtained from playing $C\sharp4$ by a piano and a violin.}\label{D4-Csharp4Plot}
\end{figure}

Now consider a $C\sharp 4$ waveform in Figure \ref{Csharp4V}. This is obtained from an audio signal file of a $C\sharp 4$ played with violin. The violin is bowed at \(0.35\sec\) and the bowing is stopped at \(2.107\sec\). The temporal envelope in this figure is derived by {\sc Matlab}. This consists of a delay time, attack time, sustain level and release time. The segment borders and two extra breaking points make a partition for the time interval as
\be\label{ti2}
t_0=0, t_1:=0.345, t_1^1:=0.392, t_2:=0.5717, t_3:= 2.107, t_3^1:= 2.22, t_4:=2.5.
\ee Following Algorithm \ref{Algorthm}, we first explain the sustain level interval. The sustain level includes two steady-states at the beginning and at the end of the sustain segment. Thus,
\begin{eqnarray*}
\mu_2+(\gamma(t_{2})-0.03)^2+b(\gamma(t_{2})-0.03)^4=0,\quad \mu_2+{\gamma(t_{3})}^2+b{\gamma(t_{3})}^4=0,
\end{eqnarray*} where \(\gamma(t_1)=0.331, \gamma(t_2)=0.622.\) Hence, \(b:=-2.15\) and \(\mu_2:= -0.072.\)  By item \ref{I_coeffA},
\begin{eqnarray*}
&\frac{{\rm d} }{{\rm d}t} \rho(\gamma^{-1}(\gamma(t_{2})-0.03+\gamma(t_{3}))/2, \mu_2)= \dot{\gamma}(\gamma^{-1}(\gamma(t_{2})-0.03+\gamma(t_{3}))/2) \quad\;  \hbox{ and } \; \alpha:=23.&
\end{eqnarray*} In the delay segment, we let \(\rho(t_0, \mu_0)=0.01\) and \(\mu_0:=-0.1\) while the attack duration takes an extra breaking point given by \(t_1^1=\gamma^{-1}((\gamma(t_1)+\gamma(t_2))/2) =0.392.\) From items \ref{I_Coeff_2}, \ref{I_Coeff_3}, \(\rho(t_1, \mu_1)=0.0045,\) and \(\frac{{\rm d} \rho(t_1^+, \mu_1)}{{\rm d}t}=\frac{{\rm d} \gamma(t_1^+)}{{\rm d}t}=0.35,\) we have \(\mu_1:=3.36\) and the condition \eqref{con3} is satisfied. For the interval \((t_1^1, t_2),\) \(\mu_1^1:=0.11\) by a similar argument. In the sustain stage, we have already obtained the value \(\mu_2:= -0.072\). In the release time, we have the breaking point \(t_3^1=\gamma^{-1}((\gamma(t_3)+\gamma(t_4))/4)=2.22\), \(\frac{{\rm d} \rho}{{\rm d}t}(t_3^+, \mu_3)=\dot{\gamma}({t_3}^+)=-1.31\) and \(\rho(t_3, \mu_3)=0.6132\). Hence, \(\mu_3:=-0.165.\) Finally, the remaining parameter \(\mu_3^1:=-0.6\) and is due to the time interval \((t_3^1, t_4)\). The dynamical models of the pitch \(C\sharp4\) played by violin is expressed by the initial value problem \eqref{SimDyn} and \(g\) given by equation \eqref{gForm}. Now we have \(\sum_{i=0}^{6}d_i/ d_1=1.1246,\) equations \eqref{ti2} hold, and
\begin{eqnarray*}
&n=6,\, \pmb{\omega}=\pmb{\omega}_{\rm violin}^{C\sharp4},\, {\pmb{c}}={\pmb{c}}_{\rm violin}^{C\sharp4},\, a=-2.15, \, \alpha=23, \, g(\x, \mathbf{y})= 1.578 ({x_{1}}^2+{y_{1}}^2)-5.35({x_{1}}^2+{y_{1}}^2)^2.&
\end{eqnarray*}  The initial values are given by \(\x(0, \mu_0)=10^{-4}(-23, 61, 22, 7, 3, 2, 5)\) and \(\mathbf{y}(0, \mu_0)=\0\). The piecewise constant bifurcation parameter \(\mu\) (\ie quasi-statically varies) follows
\begin{eqnarray*}
&\mu(t):=-0.1H(t)\!+\!3.46H(t-t_1)\!-\!3.25H(t-t_1^1)\!-\!0.182H(t-t_2)\!+\!0.237H(t-t_3)\!-\!0.765H(t-t_3^1).&
\end{eqnarray*}
\begin{figure}[t]
\centering
\includegraphics[width=7.2in]{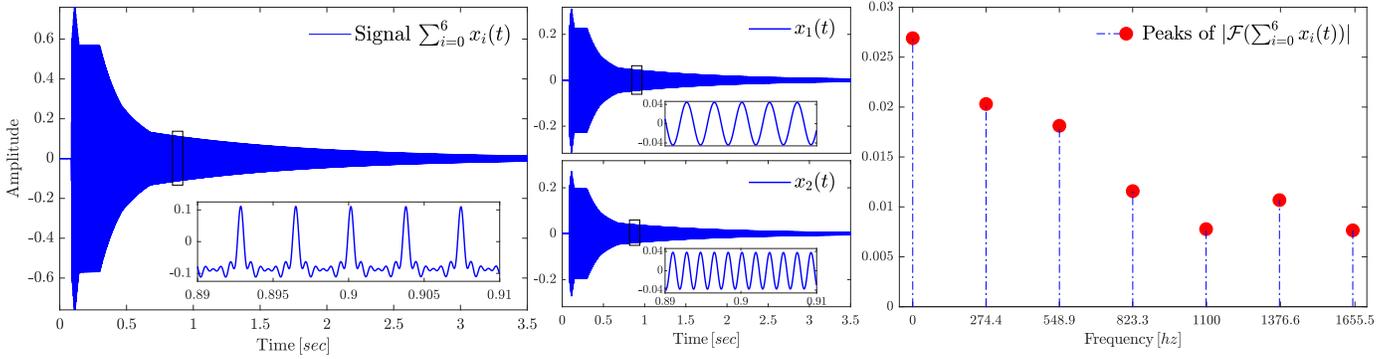} %\label{G5P}
\caption{Simulated additive synthesis \(\sum_{i=0}^{6} x_i,\) \(x_1, x_2\) and peaks for $C\sharp 4$ played by piano }\label{Sim_PianoCsharp4}
\end{figure}
The first six ratios of the leading partial amplitudes to the fundamental amplitude for the audio violin signal are
\((1, 0.362, 0.1124,  0.0513,  0.0318,  0.0825)\) while \((1,  0.362, 0.1121, 0.0513, 0.0314, 0.0813)\) are those for the simulated violin data corresponding with \(\sum_{i=0}^{6} x_i(t)\).

%\addcontentsline{toc}{section}{References}
%\subsection{Dynamical model of pitch $C\sharp4$ played by violin and piano and bifurcation control analysis}
%\subsection{Differential systems } % for pitch \(C\sharp4\) played by piano and violin}

\section{Leaf bifurcations of \(C\sharp 4\) played by piano and violin}\label{SecleafBifurcations}

In this section we explain how a consecutive ordered set of four different type of pitchfork bifurcations describe the audio $C\sharp 4$ file generated by piano. The description for the ordered set of bifurcations associated with the audio $C\sharp 4$ played by violin include only two consecutive pitchfork bifurcations followed by a double saddle-node bifurcation. The latter constitutes a complete hysteresis cycle for the scalar differential equation. This illustrates an audio sample of hysteresis type phenomena involving a complete hysteresis cycle and hypertori. Recall that the pitchfork and saddle nodes of bifurcations are merely associated with the reduced scalar equation. However, they infer about the existence and disappearance of the flow-invariant Clifford hypertori for the Eulerian differential system over the musical note's duration. % for the actual Eulerian differential system with multiple Hopf singularity model of the musical note.

The bifurcation point for the $C\sharp 4$ waveform generated by piano is \((\mu,r_1 )=(0, 0).\) Bifurcations occur at the border points \(t_1, t_2, t_3\) and \(t_4\) while there is no any type of bifurcation at the breaking points. By Theorem \ref{Bifurcations}, each of these correspond with a scalar supercritical {\it pitchfork} type bifurcation due to the change in the sign of the parameter \(\mu\). In the delay interval \((t_0, t_1),\) the sign of \(\mu_0\) is negative. Therefore, there is no flow-invariant hypertorus for the differential system, the origin is stable and the trajectories remain close to the origin. On the attack interval \((t_1, t_2),\) there is one stable flow-invariant hypertorus \(\mathbb{T}_6.\) This is because the parameters \(\mu_1, \mu_1^1,\) and \(\mu_1^2\) are all positive. In the decay interval \((t_2, t_3),\) \(\mu_2<0\) and thus, the hypertorus disappears and the origin becomes asymptotically stable. For the sustain level, \(\mu_3\) is close to \(13.84\,{r_1(t_3^-)}^2\). Thus, there is an asymptotically stable hypertorus and the trajectories remain close to the hypertorus over the interval \((t_3, t_4).\) Since \(\mu_4<0\) and \(\mu_4^1<0\), trajectories tend to the origin over the release time \((t_4, t_5)\).

\begin{figure}[t]
\centering
\includegraphics[width=7.2in]{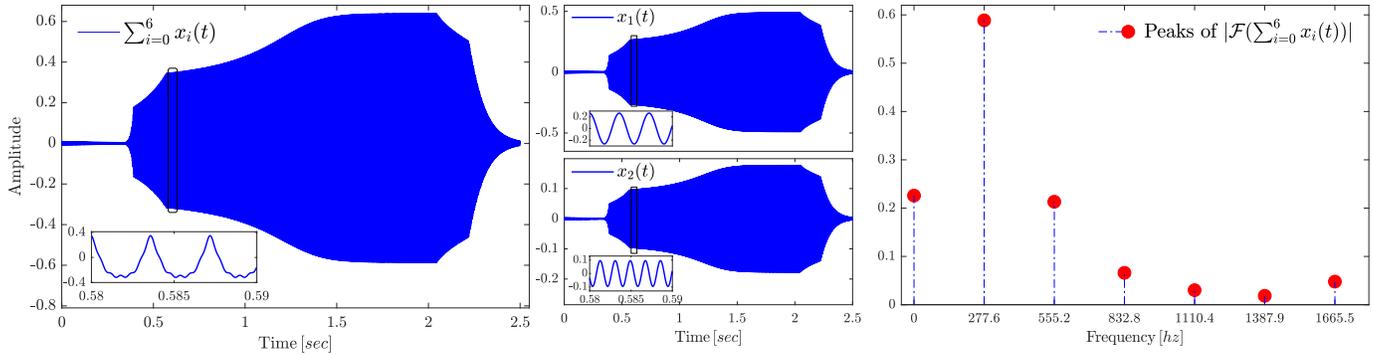} %\label{G5P}
\caption{Additive synthesis and simulations for the audio pitch $C\sharp 4$ played by violin }\label{Sim_PianoCsharp4}
\end{figure}

Now consider the waveform obtained from playing \(C\sharp 4\) by violin. For the delay duration \((t_0, t_1)\), \(\mu_0<0\) while \(\mu_1, \mu_1^1\) are both positive over attack time interval \((t_1, t_2)\). This change of sign for \(\mu\) at time \(t_1 \) corresponds with a subcritical {\it pitchfork} type bifurcation for the scalar equation \eqref{rhoDot}. By Lemma \ref{lem3}, this means that one flow-invariant hypertorus \(\mathbb{T}_6\) disappears when both positive and negative roots of the scalar equation collide at zero when \(t=t_1\). Therefore, there is an unstable flow-invariant hypertorus \(\mathbb{T}_6\) over attack time interval \((t_1, t_2)\) while there are two flow-invariant hypertori \(\mathbb{T}_6\) over the delay interval \((t_0, t_1)\). In the delay case, the internal flow-invariant hypertorus is unstable while the origin and the external hypertorus are asymptotically stable.

In the time interval corresponding with the sustain level \((t_2, t_3),\) \(-0.116<\mu_1<0\) and one local hypertorus come again to existence. Recall that we had an asymptotically stable 6-torus in the attack time and a new unstable 6-torus is born from the origin when time passes through \(t_2.\) This is associated with a subcritical {\it pitchfork} type of bifurcation for the scalar equation \eqref{rhoDot}. For the release interval \((t_3, t_4),\) \(\mu_1<-0.116\) and both of the hypertori disappear. In terms of Lemma \ref{lem3}, \(T_{2SN}=\{\mu \,|\, \mu= -0.116\}\) is a double saddle-node bifurcation transition variety for the scalar equation \eqref{rhoDot}. These saddle-node bifurcations occur at \((\mu, r_1)=(-0.116, \pm 0.5)\). Since only two of the roots are positive, there are only two hypertori over \((t_2, t_3)\) and they disappear when they collide with each other at \(t_3.\) Hence, there is no hypertorus over the time interval \((t_3, t_4).\)

%\newpage

\end{document}